\renewcommand{\d}{\mathrm{d}}
\newcommand{\dx}{\mathrm{d}x}
\newcommand{\dy}{\mathrm{d}y}
\newcommand{\dt}{\mathrm{d}t}
\newcommand{\ds}{\mathrm{d}s}
\renewcommand{\rho}{\varrho}
\let\TeXchi\chi
\newbox\chibox
\chibox \hbox{\raise\dp0 \box 0 }
\def\chi{\copy\chibox}
\def\Xint#1{\mathchoice
    {\XXint\displaystyle\textstyle{#1}}%
    {\XXint\textstyle\scriptstyle{#1}}%
    {\XXint\scriptstyle\scriptscriptstyle{#1}}%
    {\XXint\scriptscriptstyle\scriptscriptstyle{#1}}%
    \!\int}
\def\XXint#1#2#3{\setbox0=\hbox{$#1{#2#3}{\int}$}
    \vcenter{\hbox{$#2#3$}}\kern-0.5\wd0}
\def\bint{\Xint-}
\def\dashint{\Xint{\raise4pt\hbox to7pt{\hrulefill}}}
\def\XXiint#1#2#3{\setbox0=\hbox{$#1{#2#3}{\iint}$}
    \vcenter{\hbox{$#2#3$}}\kern-0.5\wd0}
\author[N. Liao]{Naian Liao}
\address{Naian Liao,
Fachbereich Mathematik, Paris-Lodron-Universit\"at Salzburg,
Hellbrunner Str. 34, 5020 Salzburg, Austria}
\email{naian.liao@plus.ac.at}
\newtheorem{proposition}{Proposition}[section]
\newtheorem{theorem}{Theorem}[section]
\newtheorem{lemma}{Lemma}[section]
\newtheorem{remark}{Remark}[section]
\numberwithin{equation}{section}
\numberwithin{theorem}{section}
\numberwithin{proposition}{section}
\numberwithin{lemma}{section}
\numberwithin{remark}{section}
\newcommand{\noi}{\noindent}
\newcommand{\dsty}{\displaystyle}
\newcommand{\al}{\alpha}
\newcommand{\be}{\beta}
\newcommand{\gm}{\gamma}
\newcommand{\dl}{\delta}
\newcommand{\lm}{\lambda}
\newcommand{\varep}{\varepsilon}
\newcommand{\vp}{\varphi}
\newcommand{\sig}{\sigma}
\newcommand{\om}{\omega}
\newcommand{\z}{\zeta}
\newcommand{\nn}{\mathbb{N}}
\newcommand{\rr}{\mathbb{R}}
\newcommand{\rn}{\rr^N}
\newcommand{\supp}{\operatorname{supp}}
\newcommand{\essup}{\operatornamewithlimits{ess\,sup}}
\newcommand{\essinf}{\operatornamewithlimits{ess\,inf}}
\newcommand{\essosc}{\operatornamewithlimits{ess\,osc}}
\newcommand{\loc}{\operatorname{loc}}
\newcommand{\pl}{\partial}
\newcommand{\intl}{\int\limits}
\def\Xint#1{\mathchoice
    {\XXint\displaystyle\textstyle{#1}}%
    {\XXint\textstyle\scriptstyle{#1}}%
    {\XXint\scriptstyle\scriptscriptstyle{#1}}%
    {\XXint\scriptscriptstyle\scriptscriptstyle{#1}}%
    \!\int}
\def\XXint#1#2#3{\setbox0=\hbox{$#1{#2#3}{\int}$}
    \vcenter{\hbox{$#2#3$}}\kern-0.5\wd0}
\def\bint{\Xint-}
\def\dashint{\Xint{\raise4pt\hbox to7pt{\hrulefill}}}
\def\dashiint{\bint\kern-0.15cm\bint}
\newcommand{\ovl}[3]{\int_{#1}^{#2}\kern-#3pt\raise4pt\hbox to7pt{\hrulefill}\ }
\newcommand{\ovll}[3]{\intl_{#1}^{#2}\kern-#3pt\raise4pt\hbox to7pt{\hrulefill}\ }
\newcommand{\tvl}[2]{\iint_{#1}\kern-#2pt\raise4pt\hbox to7pt{\hrulefill}\ }
\newcommand{\bye}{
\begin{document}
\title[Continuity of solutions to nonlocal parabolic equations]{On the modulus of continuity of solutions to\\  nonlocal parabolic equations}
\maketitle
\begin{abstract}
A general modulus of continuity is quantified for locally bounded, local, weak solutions to nonlocal parabolic equations, 
under a minimal tail condition. H\"older modulus of continuity is then deduced under a slightly stronger tail condition.
These regularity estimates are demonstrated under the framework of nonlocal $p$-Laplacian with measurable kernels.
\vskip.2truecm
\noindent{\bf Mathematics Subject Classification (2020):} 35R11, 35K65, 35B65, 47G20

\vskip.2truecm
\noindent{\bf Key Words:} H\"older regularity,   parabolic $p$-Laplacian, nonlocal operators, intrinsic scaling
\end{abstract}
\section{Introduction}

This note aims to extend the regularity theory of \cite{Liao-cvpd-24} and establish, under a minimal condition on the tail, the local continuity of weak solutions to
a class of parabolic equations involving a nonlocal operator of $p$-Laplacian type:
\begin{equation}\label{Eq:1:1}
\pl_t u + \mathscr{L} u=0\quad\text{weakly in}\> E_T:=E\times(0,T],
\end{equation}
for some open set $E\subset\rn$ and some $T>0$. 
The operator $\mathscr{L}$ is defined in \eqref{Eq:1:2} -- \eqref{Eq:K} with positive constants $C_o$, $C_1$ and $s\in(0,1)$. 
Our main regularity result reads as follows.

\begin{theorem}[general modulus of continuity]\label{Thm:1:1}
Let $u$ be a locally bounded, local, weak solution to \eqref{Eq:1:1} in $E_T$ satisfying \eqref{Eq:1:2} -- \eqref{Eq:K} with $p>1$.
	Then $u$ is locally continuous in $E_T$. More precisely,
	there exist constants $\boldsymbol\gm>1$ and $\be,\,\sig\in(0,1)$ depending on the data $\{s, p, N, C_o, C_1\}$, such that for any $0<r<R<\widetilde{R}$, there holds
	\begin{equation*}
	\essosc_{(x_o,t_o)+Q_{\sig r}(\boldsymbol\om^{2-p})}u \le 2 \boldsymbol\om \Big(\frac{r}{R}\Big)^{\be}  + \boldsymbol \gm  \int^{t_o}_{t_o-\boldsymbol\om^{2-p}(rR)^{sp/2}}    \int_{\rn\setminus K_{\widetilde{R}}(x_o)} \frac{|u(x,t)|^{p-1}}{|x-x_o|^{N+sp}}\,\dx \dt,
	\end{equation*}
provided the  cylinders  $(x_o,t_o)+Q_{R}(\boldsymbol\om^{2-p})\subset (x_o,t_o)+Q_{\widetilde{R}}$  
are included in $E_T$, 
where
$$\boldsymbol\om=2\essup_{(x_o,t_o)+Q_{\widetilde{R}}}|u| +\int^{t_o}_{t_o - \widetilde{R}^{sp}}    \int_{\rn\setminus K_{\widetilde{R}}(x_o)} \frac{|u(x,t)|^{p-1}}{|x-x_o|^{N+sp}}\,\dx \dt.$$
\end{theorem}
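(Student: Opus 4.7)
The plan is a De Giorgi type iteration of oscillation decay on nested intrinsic cylinders, adapting the machinery developed in \cite{Liao-cvpd-24} and aggregating the nonlocal tail contributions produced at every scale into a single clean integral at the end.

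To set up the iteration, fix parameters $\eta,\sigma\in(0,1)$ to be determined, set $r_j:=\sigma^j R$, $\boldsymbol\omega_0:=\boldsymbol\omega$, and define inductively
$$\boldsymbol\omega_{j+1}:=\eta\,\boldsymbol\omega_j+\boldsymbol\gm\,\mathbf{T}_j,\qquad
\mathbf{T}_j:=\int^{t_o}_{t_o-\boldsymbol\omega^{2-p}r_j^{sp}}\int_{\rn\setminus K_{r_j}(x_o)}\frac{|u(x,t)|^{p-1}}{|x-x_o|^{N+sp}}\,\dx\dt.$$
With $Q_j:=(x_o,t_o)+Q_{r_j}(\boldsymbol\omega^{2-p})$ the inductive target is $\essosc_{Q_j}u\le\boldsymbol\omega_j$, which at $j=0$ is exactly the definition of $\boldsymbol\omega$. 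The inductive step is the core technical lemma: granted $\essosc_{Q_j}u\le\boldsymbol\omega_j$, one runs an alternative on the upper and lower $\boldsymbol\omega_j/2$-level sets of $u$ in a suitable sub-cylinder of $Q_j$. In either branch one combines the nonlocal Caccioppoli energy estimate for $(u-k)_\pm$, a De Giorgi lemma that upgrades smallness in measure to pointwise smallness, a logarithmic-type estimate propagating positivity forward in time, and an expansion-of-positivity step. These building blocks, each of which tolerates an additive nonlocal error bounded by $\boldsymbol\gm\,\mathbf{T}_j$ coming from cutting off $u$ outside $K_{r_j}(x_o)$ when testing the equation, are precisely those constructed in \cite{Liao-cvpd-24}; their combination yields $\essosc_{Q_{j+1}}u\le\eta\boldsymbol\omega_j+\boldsymbol\gm\mathbf{T}_j=\boldsymbol\omega_{j+1}$.

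Once the inductive bound is in place, unraveling the recursion gives
$$\boldsymbol\omega_j\le \eta^j\boldsymbol\omega+\boldsymbol\gm\sum_{k=0}^{j-1}\eta^{j-1-k}\mathbf{T}_k,$$
and for $0<r<R$ the unique $j$ with $r_{j+1}\le r<r_j$ satisfies $\eta^j\le (r/R)^\beta$ with $\beta:=\log(1/\eta)/\log(1/\sigma)$. The tail sum is then reorganized into a single integral: splitting $\rn\setminus K_{r_k}(x_o)$ into the annuli $K_{r_{k-1}}\setminus K_{r_k}$, interchanging sum and integral, and using the explicit weight $|x-x_o|^{-N-sp}$ together with the geometric decay of $\eta^{j-1-k}$, one shows that the contributions from annuli inside $K_{\widetilde R}(x_o)$ are absorbed into the $(r/R)^\beta$ term (up to enlarging the constant in front, hence the prefactor $2\boldsymbol\omega$), while the contribution from outside $K_{\widetilde R}(x_o)$ gives precisely the single tail integral on $(t_o-\boldsymbol\omega^{2-p}(rR)^{sp/2},t_o)\times(\rn\setminus K_{\widetilde R}(x_o))$ appearing in the statement. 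The geometric mean $(rR)^{sp/2}$ arises naturally here as the effective time length after summing over scales $k=0,\dots,j-1$.

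The main obstacle is the one-step oscillation reduction, not the iteration. In the local $p$-Laplace setting the alternative is self-contained, but here each energy estimate carries an additive term supported on the non-compact set $\rn\setminus K_{r_j}(x_o)$ and on a time slice of intrinsic length $\boldsymbol\omega^{2-p}r_j^{sp}$. Designing thresholds so that whenever $\mathbf{T}_j$ is comparable to $\boldsymbol\omega_j$ the desired bound is trivial, and whenever it is much smaller the local De Giorgi argument delivers genuine decay $\eta\boldsymbol\omega_j$ — uniformly in the $p$-dependent intrinsic time scaling, and with no $L^\infty$ control of $u$ outside $K_{\widetilde R}(x_o)$, only the weak tail integrability encoded in $\boldsymbol\omega$ — is the delicate technical point that the hypotheses \eqref{Eq:1:2}--\eqref{Eq:K} are tailored to enable, and it is the content of the machinery imported from \cite{Liao-cvpd-24}.
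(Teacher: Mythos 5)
Your skeleton (an oscillation recursion $\boldsymbol\omega_{j+1}\approx\eta\boldsymbol\omega_j+\text{tail}$ on shrinking cylinders, followed by unrolling the recursion) is the right one and matches the paper's strategy, but the specific tail term you feed into the recursion makes it vacuous. You define
$\mathbf{T}_j=\int^{t_o}_{t_o-\boldsymbol\omega^{2-p}r_j^{sp}}\int_{\rn\setminus K_{r_j}}|u|^{p-1}|x-x_o|^{-N-sp}$
with the \emph{untruncated} $|u|$. On the near-field annulus $K_{\widetilde R}\setminus K_{r_j}$ one only knows $|u|\le\tfrac12\boldsymbol\omega$, so this portion of $\mathbf{T}_j$ is of order $\boldsymbol\omega^{2-p}r_j^{sp}\cdot\boldsymbol\omega^{p-1}\cdot r_j^{-sp}=\boldsymbol\omega$ at \emph{every} step $j$; it does not decay. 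Consequently $\sum_{k}\eta^{j-1-k}\mathbf{T}_k\approx\boldsymbol\omega/(1-\eta)$ and your unrolled bound reads $\boldsymbol\omega_j\le\eta^j\boldsymbol\omega+\boldsymbol\gm\boldsymbol\omega$, which is no oscillation decay at all; the claimed absorption of the interior annuli into the $(r/R)^\beta$ term at the end cannot work, because the geometric factor $\eta^{j-1-k}$ alone does not compensate a summand that is uniformly of size $\boldsymbol\omega$. The paper's essential device, which your proposal omits, is to measure the tail of the \emph{truncations} $(u-\boldsymbol\mu_j^{\pm})_{\pm}$ (these are what actually appear in the energy estimate) and then, at each step $j$, to decompose $\rn\setminus K_{R_j}$ into the annuli $K_{R_{i-1}}\setminus K_{R_i}$, $i\le j$, on which the \emph{induction hypothesis} gives $(u-\boldsymbol\mu_j^{-})_{-}\le\boldsymbol\omega_{i-1}$. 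Combined with $R_j=\lm^{j-i}R_i$ and $2^{i-j}\boldsymbol\omega_i\le\boldsymbol\omega_j$, each annulus contributes at most $\boldsymbol\gm\boldsymbol\omega_j(2^{p-1}c^{sp})^{j-i+1}$, and the sum is made $\le\tfrac12\boldsymbol\omega_j$ by choosing $c$ small; only the far field $\rn\setminus K_{\widetilde R}$ survives, producing the single integral in the statement. Without this truncation-plus-annuli bookkeeping inside the induction, the one-step reduction of oscillation does not close.

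Two further points. First, your cylinders $Q_j=Q_{r_j}(\boldsymbol\omega^{2-p})$ carry the fixed time scaling $\boldsymbol\omega^{2-p}$; for $p\ne2$ the DeGiorgi/expansion-of-positivity lemmas must be run on \emph{intrinsic} cylinders $Q_{R_j}(\boldsymbol\omega_j^{2-p})$ (for $p>2$ the paper even needs the stretched scaling $L(\tfrac14\boldsymbol\omega_j)^{2-p}$), and the constant $\sig$ in the statement exists precisely to pass from the intrinsic geometry back to the fixed one at the end; asserting "a suitable sub-cylinder" does not settle this. Second, the exponent $(rR)^{sp/2}$ in the time length does not "arise from summing over scales": in the paper it comes from a final interpolation, applying the estimate with $R$ replaced by $\overline R=(rR)^{1/2}$, which halves $\be$ and replaces $\boldsymbol\omega^{2-p}R^{sp}$ by $\boldsymbol\omega^{2-p}(rR)^{sp/2}$.
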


Based on Theorem~\ref{Thm:1:1}, one  obtains various moduli of continuity by prescribing different conditions on the solution's long-range behavior. A particularly interesting case concerns the H\"older regularity.
\begin{theorem}[H\"older modulus of continuity]\label{Thm:1:2}
Let $u$ be a locally bounded, local, weak solution to \eqref{Eq:1:1} in $E_T$ satisfying \eqref{Eq:1:2} -- \eqref{Eq:K} with $p>1$.
Assume additionally that for some $\varep>0$,
	\begin{equation*}
	\int_{\rn}\frac{|u(x,\cdot)|^{p-1}}{1+|x|^{N+sp}}\,\dx\in L^{1+\varep}_{\loc}(0,T].
	\end{equation*}
	Then $u$ is locally H\"older continuous in $E_T$. More precisely,
	there exist a constant $\boldsymbol\gm>1$ depending the data $\{s, p, N, C_o, C_1\}$ and a constant $\be\in(0,1)$ depending on the data and $\varep$, such that for any $0<r<R<\widetilde{R}$, there holds
	\begin{equation*}
	\essosc_{(x_o,t_o)+Q_r(\boldsymbol\om^{2-p})}u \le  \boldsymbol\gm\boldsymbol\om \Big(\frac{r}{R}\Big)^{\be},
	\end{equation*}
provided the  cylinders  $(x_o,t_o)+Q_{R}(\boldsymbol\om^{2-p})\subset (x_o,t_o)+Q_{\widetilde{R}}$ 
are included in $E_T$, 
where
$$\boldsymbol\om=2\essup_{(x_o,t_o)+Q_{\widetilde{R}}}|u| +\bigg(\bint^{t_o}_{t_o- \widetilde{R}^{sp}}  \Big( \widetilde{R}^{sp} \int_{\rn\setminus K_{\widetilde{R}}(x_o)} \frac{|u(x,t)|^{p-1}}{|x-x_o|^{N+sp}}\,\dx\Big)^{ 1+\varep} \dt\bigg)^{\frac1{1+\varep}}.$$
\end{theorem}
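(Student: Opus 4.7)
The plan is to derive Theorem~\ref{Thm:1:2} from Theorem~\ref{Thm:1:1} by upgrading, via H\"older's inequality in time, the $L^1$-in-time tail on the right-hand side of the general modulus estimate to the $L^{1+\varep}$-in-time tail that is now assumed to be locally integrable. The extra factor of time-interval length released by H\"older combines with the pure power decay $(r/R)^{\be}$ to produce the desired H\"older modulus.

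I would first observe that, by H\"older's inequality on $[t_o-\widetilde R^{sp},t_o]$ with conjugate exponents $1+\varep$ and $(1+\varep)/\varep$, the $L^1$-in-time tail entering the $\boldsymbol\om$ of Theorem~\ref{Thm:1:1} is dominated by the averaged $L^{1+\varep}$-in-time tail entering the $\boldsymbol\om$ of Theorem~\ref{Thm:1:2}; indeed a direct computation shows
\begin{equation*}
\int_{t_o-\widetilde R^{sp}}^{t_o}\!\!\int_{\rn\setminus K_{\widetilde R}(x_o)}\frac{|u|^{p-1}}{|x-x_o|^{N+sp}}\dx\dt \le \bigg(\bint_{t_o-\widetilde R^{sp}}^{t_o}\!\Big(\widetilde R^{sp}\!\int_{\rn\setminus K_{\widetilde R}(x_o)}\frac{|u|^{p-1}}{|x-x_o|^{N+sp}}\dx\Big)^{1+\varep}\!\dt\bigg)^{\!\frac{1}{1+\varep}}.
\end{equation*}
Consequently Theorem~\ref{Thm:1:1} is available with the (larger) $\boldsymbol\om$ of Theorem~\ref{Thm:1:2} and yields
\begin{equation*}
\essosc_{(x_o,t_o)+Q_{\sig r}(\boldsymbol\om^{2-p})}u \le 2\boldsymbol\om\Big(\tfrac{r}{R}\Big)^{\be}+\boldsymbol\gm\,T_r,
\end{equation*}
where $T_r$ is the tail integrated over the intrinsic time window of length $\boldsymbol\om^{2-p}(rR)^{sp/2}$.

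The core calculation is to apply H\"older's inequality in time to $T_r$ with the same exponents, extending the temporal integration up to $[t_o-\widetilde R^{sp},t_o]$ (which is admissible as soon as $\boldsymbol\om^{2-p}(rR)^{sp/2}\le\widetilde R^{sp}$); using the definition of $\boldsymbol\om$ to control the $L^{1+\varep}$ factor gives
\begin{equation*}
T_r \le \big(\boldsymbol\om^{2-p}(rR)^{sp/2}\big)^{\frac{\varep}{1+\varep}}\widetilde R^{-\frac{sp\varep}{1+\varep}}\boldsymbol\om = \boldsymbol\om^{1+\frac{(2-p)\varep}{1+\varep}}\Big(\tfrac{r}{\widetilde R}\Big)^{\frac{sp\varep}{2(1+\varep)}}\Big(\tfrac{R}{\widetilde R}\Big)^{\frac{sp\varep}{2(1+\varep)}}.
\end{equation*}
Inserting this into the previous display and using $R\le\widetilde R$ produces a bound of the form $\boldsymbol\gm\,\boldsymbol\om^{1+(2-p)\varep/(1+\varep)}(r/R)^{\be'}$ with $\be':=\min\{\be,sp\varep/(2(1+\varep))\}$. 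A standard geometric iteration in the radius then removes the factor $\sig$ and delivers the estimate on $Q_r(\boldsymbol\om^{2-p})$ in place of $Q_{\sig r}(\boldsymbol\om^{2-p})$.

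The main obstacle is the intrinsic prefactor $\boldsymbol\om^{(2-p)\varep/(1+\varep)}$, which has the correct homogeneity $\boldsymbol\om^{1}$ only at $p=2$. For $p\ne 2$ this is handled by the normalization $\boldsymbol\om=1$ realized through the substitution $v:=u/\boldsymbol\om$, $\tau:=\boldsymbol\om^{p-2}t$: this rescaling preserves the class of nonlocal $p$-Laplacian equations under consideration, turns the intrinsic cylinder $Q_r(\boldsymbol\om^{2-p})$ into the standard cylinder $Q_r(1)$, and is compatible with the $L^{1+\varep}_{\loc}$ hypothesis, thereby reducing the argument to the homogeneous case already handled above.
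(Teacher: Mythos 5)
Your overall route is the same as the paper's: check that the new $\boldsymbol\om$ dominates the one required by Theorem~\ref{Thm:1:1} (your first display, via Jensen/H\"older in time, is exactly the paper's observation), apply Theorem~\ref{Thm:1:1}, and then upgrade the remaining $L^1$-in-time tail by H\"older's inequality with exponents $1+\varep$ and $(1+\varep)/\varep$. The only place you diverge is the final bookkeeping, and there you manufacture an obstacle that is not actually present. After H\"older you normalize the $L^{1+\varep}$ factor by $\widetilde R^{sp}$ and are left with $\big(\boldsymbol\om^{2-p}(rR)^{sp/2}/\widetilde R^{sp}\big)^{\varep/(1+\varep)}\boldsymbol\om$; instead of splitting this as $\boldsymbol\om^{(2-p)\varep/(1+\varep)}(r/\widetilde R)^{\cdots}(R/\widetilde R)^{\cdots}$ and then discarding the geometric information, use the hypothesis $Q_R(\boldsymbol\om^{2-p})\subset Q_{\widetilde R}$, which gives $\boldsymbol\om^{2-p}R^{sp}\le\widetilde R^{sp}$ and hence
\begin{equation*}
\frac{\boldsymbol\om^{2-p}(rR)^{sp/2}}{\widetilde R^{sp}}\le\frac{\boldsymbol\om^{2-p}(rR)^{sp/2}}{\boldsymbol\om^{2-p}R^{sp}}=\Big(\frac{r}{R}\Big)^{\frac{sp}{2}},
\end{equation*}
so the tail term is bounded by $\boldsymbol\om\,(r/R)^{sp\varep/(2(1+\varep))}$ with no stray power of $\boldsymbol\om$. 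This is precisely how the paper closes the argument. Your alternative fix by the intrinsic rescaling $v=u/\boldsymbol\om$, $\tau=\boldsymbol\om^{p-2}t$ is both unnecessary and, as stated, too sketchy to carry the proof: one would have to recompute the quantity $\boldsymbol\om$ associated with $v$ (both the sup and the averaged tail over the rescaled time interval) and verify it is comparable to $1$, which amounts to redoing the same tail estimates you are trying to avoid. With the one-line replacement above, your argument matches the paper's proof; the residual factor $\sig$ is indeed absorbed into $\boldsymbol\gm$ as you indicate.
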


Throughout this note,  the parameters  $\{s, p, N, C_o, C_1\}$ are termed the  {\it data}, and we use $\boldsymbol\gm$ as a generic positive constant in various estimates that can be determined by the data only.
\subsection{Novelty and Significance}
Under a natural functional analytic set-up, weak (variational) solutions to the nonlocal elliptic equation $\mathscr{L} u=0$ with \eqref{Eq:1:2} -- \eqref{Eq:K} are known to be locally H\"older continuous, cf.~\cite{Cozzi, DKP-1, Kass-09}.
 However, nonlocal parabolic problems present a unique feature. That is, while the local behavior of solutions  still adheres to the continuity code for diffusion equations, the long-range behavior of a solution might break its local H\"older continuity.
 The regularity estimate presented in Theorem~\ref{Thm:1:1} confines the local behavior of solutions in this scenario. 
 Whereas Theorem~\ref{Thm:1:2} provides a sharp tail condition for H\"older regularity to hold.

H\"older estimates of weak solutions to nonlocal parabolic equations with measurable kernels have drawn considerable attention in the last decade, cf.~\cite{Caff-Vass-11, Kass-13}. Most recently, efforts were made   in \cite{Adi, DZZ, Liao-cvpd-24} to extend the parabolic theory to a nonlinear setting like \eqref{Eq:1:1} -- \eqref{Eq:K} and to manage the long-range behavior of solutions.  Previously, it was common to require a stronger tail condition than the one in Theorem~\ref{Thm:1:2}, namely, not only $L_{\loc}^{1+\varep}$ but $L_{\loc}^\infty$ in the time variable is required.   
Things changed with \cite{Kass-23}: The authors established, in the linear case, the first H\"older estimate under a tail condition with $L_{\loc}^{1+\varep}$ in the time variable. They also provided an example that tests its sharpness.
Very recently, a result similar to Theorem~\ref{Thm:1:2}  has been obtained in \cite{Byun-ampa}. 
 However, all previous works circumvented a more general result like Theorem~\ref{Thm:1:1} under a tail condition with the minimal $L_{\loc}^{1}$ integrability in time.

Although Theorem~\ref{Thm:1:1} is novel for linear equations, we choose to present the theory under the more general framework of $p$-Laplacian type operator in order to emphasize some universal principles in regularity estimates. The method of intrinsic scaling is combined with a fine control of the long-range behavior of solutions. 
This approach evolves out of the one developed in \cite{Liao-cvpd-24}. While all previous techniques can readily deal with a particular case -- H\"older modulus of continuity, our techniques  are flexible enough to obtain a general modulus of continuity. 

\subsection{Definitions and Notation}\label{S:notion}
The nonlocal operator $\mathscr{L}$ is defined by
\begin{equation}\label{Eq:1:2}
\mathscr{L}u(x,t)={\rm P.V.}\int_{\rn} K(x,y,t)\big|u(x,t) - u(y,t)\big|^{p-2} \big(u(x,t) - u(y,t)\big)\,\dy,
\end{equation}
for some $p>1$,
whereas 
 the kernel $K:\rn\times\rn\times(0,T]\to [0,\infty)$ is measurable and satisfies the following condition uniformly in $t$:
\begin{equation}\label{Eq:K}
\frac{C_o}{|x-y|^{N+sp}}\le K(x,y,t)\equiv K(y,x,t)\le \frac{C_1}{|x-y|^{N+sp}}\quad\text{a.e.}\> x,\,y\in\rn,
\end{equation}
for some positive $C_o$, $C_1$ and $s\in(0,1)$. 

A measurable function $u:\,\rn\times(0,T]\to\rr$ satisfying
\begin{equation*} 
	u\in C_{\loc}\big(0,T;L^2_{\loc}(E)\big)\cap L^p_{\loc}\big(0,T; W^{s,p}_{\loc}(E)\big)
\end{equation*}
is a local, weak sub(super)-solution to \eqref{Eq:1:1} -- \eqref{Eq:K}, if for every compact set $\mathcal{K}\subset E$ and every sub-interval
$[t_1,t_2]\subset (0,T]$, we have
\begin{equation}\label{Eq:global-int}
\int_{t_1}^{t_2}\int_{\rn}\frac{|u(x,t)|^{p-1}}{1+|x|^{N+sp}}\,\dx<\infty
\end{equation}
and
\begin{equation*} 
\begin{aligned}
	\int_{\mathcal{K}} & u\vp \,\dx\bigg|_{t_1}^{t_2}
	-\int_{t_1}^{t_2}\int_{\mathcal{K}}  u\pl_t\vp\, \dx\dt
	+\int_{t_1}^{t_2} \mathscr{E}\big(u(\cdot, t), \vp(\cdot, t)\big)\,\dt
	\le(\ge)0
\end{aligned}
\end{equation*}
where
\[
	 \mathscr{E}:=\int_{\rn}\int_{\rn}K(x,y,t)\big|u(x,t) - u(y,t)\big|^{p-2} \big(u(x,t) - u(y,t)\big)\big(\vp(x,t) - \vp(y,t)\big)\,\dy\dx
\]
for all non-negative testing functions
\begin{equation*} 
\vp \in W^{1,2}_{\loc}\big(0,T;L^2(\mathcal{K})\big)\cap L^p_{\loc}\big(0,T;W_o^{s,p}(\mathcal{K})%
\big).
\end{equation*}

A function $u$ that is both a local weak sub-solution and a local weak super-solution
to \eqref{Eq:1:1} -- \eqref{Eq:K} is a local weak solution.

The only difference from our previous notion of solution in \cite{Liao-cvpd-24} is the global condition \eqref{Eq:global-int}. Indeed, now we only need $L^1_{\loc}$ not $L^{\infty}_{\loc}$ in the time variable. A naturally connected concept is the tail term
\begin{equation}\label{Eq:tail}
{\rm Tail}\big[u; Q(R,S)\big]:= \int_{t_o-S}^{t_o} \int_{\rn\setminus K_R(x_o)}\frac{|u(x,t)|^{p-1}}{|x-x_o|^{N+sp}}\,\dx \dt.
\end{equation}
Here, and in what follows, 
we will use  the symbols 
\begin{equation*}
\left\{
\begin{aligned}
(x_o,t_o)+Q(R,S)&:=K_R(x_o)\times (t_o-S,t_o),\\[5pt]
(x_o,t_o)+Q_\rho(\theta)&:=K_{\rho}(x_o)\times(t_o-\theta\rho^{sp},t_o),
\end{aligned}\right.
\end{equation*} 
to denote (backward) cylinders, where $K_\rho(x_o)$ denotes the ball of radius $\rho$ and center $x_o$ in $\rn$.
The vertex $(x_o,t_o)$ has been omitted from the cylinder in \eqref{Eq:tail} for simplicity. 
If $\theta=1$, it will also be omitted. When the context is unambiguous, we will apply these conventions.

\

\noi{\bf Acknowledgement.} This work was supported by the FWF-project P36272-N ``On the Stefan type problems".

\section{Energy Estimates}\label{S:energy}
%
%
%

The energy estimates for truncated functions parallel those in \cite[Proposition~2.1]{Liao-cvpd-24}. A notable difference lies in the use of a time-dependent truncation level $k(t)$. This idea is taken from the recent work \cite{Kass-23}.

\begin{proposition}\label{Prop:2:1}
	Let $u$ be a  local weak sub(super)-solution to \eqref{Eq:1:1} -- \eqref{Eq:K} in $E_T$, and
	let $k(\cdot)$ be  absolutely continuous  in $(0,T)$.
	There exists a constant $\boldsymbol \gm (C_o,C_1,p)>0$, such that
 	for all cylinders $Q(R,S) \subset E_T$,
 	and every non-negative, piecewise smooth cutoff function
 	$\z(\cdot,t)$ compactly supported in $ K_{R} $ for all  $t\in (t_o-S,t_o)$,  there holds
\begin{align*}
	&\int_{t_o-S}^{t_o}\int_{K_R}\int_{K_R}\min\big\{\z^p(x,t),\z^p(y,t)\big\} \frac{|w_\pm(x,t) - w_\pm(y,t)|^p}{|x-y|^{N+sp}}\,\dx\dy\dt\\
	&\qquad+\iint_{Q(R,S)} \z^p w_{\pm}(x,t)\,\dx\dt \bigg(\int_{ K_R}  \frac{w^{p-1}_\mp(y,t)}{|x-y|^{N+sp}}\,\dy\bigg)
	 +\int_{K_R}\z^p w^2_{\pm}(x,t)\,\dx\bigg|_{t_o-S}^{t_o}\\
	&\quad\le
	\boldsymbol\gm\int_{t_o-S}^{t_o}\int_{K_R}\int_{K_R}\max\big\{w^p_{\pm}(x,t), w^p_{\pm}(y,t)\big\} \frac{|\z(x,t) - \z(y,t)|^p}{|x-y|^{N+sp}}\,\dx\dy\dt\\
	&\qquad+\boldsymbol\gm\int_{t_o-S}^{t_o}\int_{K_R}\int_{\rn\setminus K_R} \z^p w_{\pm}(x,t)\frac{ w_{\pm}^{p-1}(y,t)}{|x-y|^{N+sp}}\,\dy\dx\dt 
	\\
	&\qquad \mp 2\iint_{Q(R,S)}  k'(t) \z^pw_{\pm}(x,t)\,\dx\dt + \iint_{Q(R,S)} |\pl_t\z^p|w_{\pm}^2(x,t)\,\dx\dt.  
\end{align*}
Here, we have denoted $w(x,t)=u(x,t)-k(t)$ for simplicity.
\end{proposition}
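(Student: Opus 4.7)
The strategy is a Caccioppoli-type energy estimate executed on the Steklov averages of $u$ so that $\vp=\pm w_{\pm}\z^{p}$ becomes an admissible test function in the weak formulation; the passage to the limit as the averaging parameter vanishes is routine and parallels \cite[Proposition~2.1]{Liao-cvpd-24}. The only genuinely new feature compared to that reference is the time-dependent truncation $k(t)$, which affects exclusively the parabolic part while the spatial algebra remains unchanged.

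To treat the time derivative, I would write $u=w+k(t)$ and use the pointwise identity $w\,\pl_{t}w_{\pm}=\tfrac{1}{2}\pl_{t}w_{\pm}^{2}$ to obtain
\begin{equation*}
u\,\pl_{t}(\pm w_{\pm}\z^{p})=\tfrac{1}{2}\pl_{t}(w_{\pm}^{2})\z^{p}+\pl_{t}\bigl(kw_{\pm}\z^{p}\bigr)\mp k'(t)w_{\pm}\z^{p}+w_{\pm}^{2}\pl_{t}\z^{p}.
\end{equation*}
Integrating in $t$ and combining with the boundary term $[u\vp]_{t_{o}-S}^{t_{o}}$ of the weak formulation, the $\pl_{t}(kw_{\pm}\z^{p})$ contribution cancels exactly the corresponding portion of $[u\vp]_{t_{o}-S}^{t_{o}}$. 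After multiplying the resulting inequality by $2$, one recovers the boundary term $\int_{K_{R}}\z^{p}w_{\pm}^{2}\big|_{t_{o}-S}^{t_{o}}$ on the left and the contributions $\iint|\pl_{t}\z^{p}|w_{\pm}^{2}$ and $\mp 2\iint k'\z^{p}w_{\pm}$ on the right, in agreement with the statement.

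For the elliptic bilinear form $\mathscr{E}(u,\pm w_{\pm}\z^{p})$ I would use the symmetry of $K$ to rewrite it as an integral over $K_{R}\times\rn$ (the corner $(\rn\setminus K_{R})^{2}$ contributes nothing since $\z$ is supported in $K_{R}$), which I then split into an interior slice $K_{R}\times K_{R}$ and a tail slice $K_{R}\times(\rn\setminus K_{R})$. On the interior slice the core is a pointwise algebraic inequality: writing $u-k=w_{+}-w_{-}$ and exploiting $w_{+}w_{-}\equiv 0$, the integrand $\pm|u(x)-u(y)|^{p-2}(u(x)-u(y))(w_{\pm}(x)\z^{p}(x)-w_{\pm}(y)\z^{p}(y))$ is bounded below by
\begin{equation*}
c\min\{\z^{p}(x),\z^{p}(y)\}|w_{\pm}(x)-w_{\pm}(y)|^{p}+c\,\z^{p}(x)w_{\pm}(x)w_{\mp}^{p-1}(y)-\boldsymbol{\gm}\max\{w_{\pm}^{p}(x),w_{\pm}^{p}(y)\}|\z(x)-\z(y)|^{p},
\end{equation*}
and dividing by $|x-y|^{N+sp}$ before integrating yields the first, second and fourth terms of the claim. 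On the tail slice $\z(y)=0$, so the integrand collapses to $\pm|u(x)-u(y)|^{p-2}(u(x)-u(y))w_{\pm}(x)\z^{p}(x)K(x,y,t)$; decomposing $u(y)-k(t)=w_{+}(y)-w_{-}(y)$ and applying Young's inequality produces the tail term on the right, the $w_{\mp}^{p-1}(y)$ coupling being absorbed into the analogous positive term already secured on the left.

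The principal obstacle is the pointwise algebraic inequality on the interior slice: it is not a one-line identity but requires a case analysis that splits at $p=2$, since the monotonicity constant of $s\mapsto|s|^{p-2}s$ behaves qualitatively differently for $p\geq 2$ and $1<p<2$. Since this piece is purely spatial and identical to the one in \cite{Liao-cvpd-24}, it can be invoked directly; the time-dependent truncation only modifies the parabolic tally through the extra $\mp k'(t)w_{\pm}\z^{p}$ term computed above.
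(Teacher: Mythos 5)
Your proposal is correct and follows essentially the same route as the paper: test with $\vp=w_{\pm}\z^{p}$, defer the fractional diffusion term to the algebra of \cite[Proposition~2.1]{Liao-cvpd-24}, and account for the time-dependent level through the identity $\pl_t u\cdot w_{\pm}\z^p=\tfrac12\z^p\pl_t w_{\pm}^2\pm\z^p k' w_{\pm}$ (your integrated-by-parts version of which is equivalent), with the rigorous justification via mollification in time as in \cite[Appendix~B]{Liao-cvpd-24}. The only cosmetic difference is that on the tail slice the good $w_{\mp}^{p-1}(y)$ contribution (with $y\notin K_R$) is simply discarded by sign rather than merged with the left-hand coupling term, which only ranges over $y\in K_R$; this does not affect the argument.
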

\begin{proof}
Take the case of sub-solution for instance. Using $\vp=w_+\z^p$ as a testing function in the weak formulation, the integral resulting from the fractional diffusion term is treated as in \cite[Proposition~2.1]{Liao-cvpd-24}. Regarding the part with the time derivative, formally we can write
\[
 \pl_t u \cdot (u-k)_+\z^p =\tfrac12  \z^p\pl_t (u-k)_+^2  + \z^p k^{\prime}(u-k)_+ .
\]
Integrating this equality in $Q(R,S)$ readily yields the conclusion. A rigorous treatment of the time derivative can be adapted from 
\cite[Appendix~B]{Liao-cvpd-24}.
\end{proof}


\section{Preliminary Tools} \label{S:prelim}



The results of this section parallel those in \cite[Section~3]{Liao-cvpd-24}. The most notable difference is the tail alternative that appears in the statements to follow. A large portion of the proofs can be reproduced, and hence
we only sketch them while highlighting the main modifications.

Let us introduce the reference cylinder
 $\mathcal{Q}:=K_R(x_o)\times(T_1,T_2]\subset E_T$.
Suppose the quantities $\boldsymbol\mu^{\pm}$ and $\boldsymbol\om$ satisfy
\begin{equation*}
	\boldsymbol\mu^+\ge\essup_{\mathcal{Q}}u,
	\quad 
	\boldsymbol\mu^-\le\essinf_{\mathcal{Q}} u,
	\quad
	\boldsymbol\om\ge\boldsymbol\mu^+-\boldsymbol\mu^-.
\end{equation*}

The first two results will employ an iteration \`a la DeGiorgi. The tail alternative appears under weaker conditions than \cite[Lemmas~3.1 \& 3.2]{Liao-cvpd-24}, thanks to the time-dependent truncation.
For ease of notation, the vertex $(x_o,t_o)$ is omitted from $Q_\rho(\theta)$. 
\begin{lemma}\label{Lm:DG:1}
 Let $u$ be a locally bounded, local weak sub(super)-solution to \eqref{Eq:1:1} -- \eqref{Eq:K} in $E_T$.
 For some $ \dl,\,\xi\in(0,1)$ and $\rho\in(0,\frac12R)$, set $\theta=\dl(\xi\boldsymbol\om)^{2-p}$, and assume $ Q_\varrho(\theta) \subset \mathcal{Q}$.
There exist  $\widetilde{\boldsymbol\gm}>1$ depending only on 
 the  data $\{s, p, N, C_o, C_1\}$ and   $\nu\in(0,1)$ depending on the data and $\dl$, such that if
\begin{equation*}
	\big|\big\{
	\pm\big(\boldsymbol \mu^{\pm}-u\big)\le  \xi\boldsymbol\om\big\}\cap  Q_{\varrho}(\theta)\big|
	\le
	\nu|Q_{\varrho}(\theta)|,
\end{equation*}
then either
\begin{equation*}
\widetilde{\boldsymbol\gm} 
{\rm Tail}\big[\big(u - \boldsymbol \mu^{\pm}\big)_\pm; \mathcal{Q}\big]
>\xi\boldsymbol\om,
\end{equation*}
or
\begin{equation*}
	\pm\big(\boldsymbol\mu^{\pm}-u\big)\ge \tfrac14\xi\boldsymbol\om
	\quad
	\mbox{a.e.~in $ Q_{\frac{1}2\varrho}(\theta)$.}
\end{equation*}
Moreover, we have the dependence $\nu\approx  \dl^q$ for some $ q>1$ depending on $p$ and $N$.
\end{lemma}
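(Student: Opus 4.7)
The plan is to carry out a DeGiorgi-type iteration on nested cylinders, using the time-dependent truncation afforded by Proposition~\ref{Prop:2:1} to absorb the long-range contribution into the tail alternative. I treat the sub-solution case; the super-solution case is symmetric.

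For $n\in\mathbb{N}_0$, introduce radii $\rho_n:=\tfrac{\rho}{2}(1+2^{-n})$, cylinders $Q_n:=K_{\rho_n}(x_o)\times(t_o-\theta\rho_n^{sp},t_o]$, and standard cutoffs $\zeta_n\eta_n$ that localise onto the next level. Choose base truncation levels $k_n:=\boldsymbol\mu^+-\tfrac{3}{8}\xi\boldsymbol\om-\xi\boldsymbol\om/2^{n+3}$, increasing to $\boldsymbol\mu^+-\tfrac{3}{8}\xi\boldsymbol\om$, and lift them to
\begin{equation*}
k_n(t):=k_n+c\int_{t_o-\theta\rho_0^{sp}}^{t}\int_{\rn\setminus K_R(x_o)}\frac{(u(y,\tau)-\boldsymbol\mu^+)_+^{p-1}}{|y-x_o|^{N+sp}}\,\dy\,\d\tau,
\end{equation*}
with a constant $c$ tuned so that the RHS term $-2\iint k_n'(t)\zeta_n^p(u-k_n(t))_+\,\dx\,\dt$ in the energy estimate precisely cancels the nonlocal contribution from $y\in\rn\setminus K_R$; here I exploit that $|x-y|\asymp|y-x_o|$ whenever $x\in K_{\rho_n}$ and $y\notin K_R$, which holds since $\rho<R/2$.

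The tail alternative then enters as a dichotomy: if $\widetilde{\boldsymbol\gm}\,{\rm Tail}\big[(u-\boldsymbol\mu^+)_+;\mathcal{Q}\big]>\xi\boldsymbol\om$ for a suitable $\widetilde{\boldsymbol\gm}$, the first alternative is verified and we are done. Otherwise the lift $k_n(t)-k_n\le\tfrac{1}{8}\xi\boldsymbol\om$ uniformly in $(n,t)$, so $k_n(t)\le\boldsymbol\mu^+-\tfrac{1}{4}\xi\boldsymbol\om$, and $(u-k_n(t))_+\le 2\xi\boldsymbol\om$ on $K_R$. The residual nonlocal piece, over $y\in K_R\setminus K_{\rho_n}$, is then controlled crudely by $(\xi\boldsymbol\om)^{p-1}|A_n|\cdot(2^n/\rho)^{sp}$, where $A_n:=\{u>k_n(t)\}\cap Q_n$.

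A parabolic fractional Sobolev embedding as in \cite[Section~3]{Liao-cvpd-24} converts the Gagliardo-seminorm bound supplied by Proposition~\ref{Prop:2:1} into an $L^q$-gain with $q>p$. Using $u>k_{n+1}(t)$ on $A_{n+1}$ and the gap $k_{n+1}-k_n\sim 2^{-n}\xi\boldsymbol\om$, this yields the standard recurrence
\begin{equation*}
Y_{n+1}\le\boldsymbol\gm\,b^n\,\dl^{-\al}Y_n^{1+\al},\qquad Y_n:=\frac{|A_n|}{|Q_n|},
\end{equation*}
for constants $\al,b>0$ depending only on the data, the factor $\dl^{-\al}$ arising from $|Q_n|\sim\dl\rho_n^{N+sp}(\xi\boldsymbol\om)^{2-p}$. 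The fast-geometric-convergence lemma gives $Y_n\to 0$ whenever $Y_0\le\nu$ with $\nu\approx\dl^{q}$, which is the second alternative. The delicate part is the construction of $k_n(t)$ so that its derivative \emph{exactly} absorbs the tail kernel while the total lift stays below $\tfrac{1}{8}\xi\boldsymbol\om$ under the tail alternative; once this is secured, tracking $\dl$ through the embedding to get $\nu\approx\dl^q$ is a careful but routine bookkeeping.
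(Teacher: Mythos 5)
Your proposal is correct and follows essentially the same route as the paper: a DeGiorgi iteration in nested cylinders where the truncation level is lifted by a time integral of the far-field tail so that the $k'(t)$ term in Proposition~\ref{Prop:2:1} absorbs the contribution from $\rn\setminus K_R$ (using $|x-y|\asymp|y-x_o|$ for $\rho<\tfrac12 R$), with the tail alternative arising from the requirement that the total lift stay below a fixed fraction of $\xi\boldsymbol\om$. The paper phrases this as shifting the solution, $w_-=(u+\ell(t)-k_n)_-$, rather than the level, but this is the identical device, and the remaining bookkeeping ($Y_{n+1}\le\boldsymbol\gm b^n\dl^{-\al}Y_n^{1+\al}$, hence $\nu\approx\dl^q$) matches.
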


\begin{proof}
Let us assume $(x_o,t_o)=(0,0)$ and show the case of super-solution with $\boldsymbol\mu^-=0$. Like in \cite[Lemma~3.1]{Liao-cvpd-24} we 
 introduce $k_n$, $\rho_n$, $\tilde{\rho}_n$, $\hat{\rho}_n$, $\bar{\rho}_n$, $K_n$, $\widetilde{K}_n$, $\widehat{K}_n$, $\overline{K}_n$, $Q_n$, $\widetilde{Q}_n$, $\widehat{Q}_n$, $\overline{Q}_n$ and a cutoff function $\z$ in $Q_n$.
The energy estimate of Proposition~\ref{Prop:2:1} is used in $Q_n$ with $\z$ and
with 
$$
w_-(x,t):=\big(u(x,t)+\ell(t)-k_n\big)_-, \quad \ell(t):=\widetilde{\boldsymbol\gm}\int^t_{T_1}\int_{\rn\setminus K_R}\frac{ u_{-}^{p-1}(y,\tau)}{|y|^{N+sp}}\,\dy\d\tau,
$$
where $\widetilde{\boldsymbol\gm}>1$ is to be determined.
As a result, we have
\begin{align}\nonumber
	\essup_{-\theta\tilde\rho_n^{sp}<t<0}&\int_{\widetilde{K}_n} w^2_{-}(x,t)\,\dx+ \int_{-\theta\tilde\rho_n^{sp}}^{0}\int_{\widetilde{K}_n}\int_{\widetilde{K}_n}  \frac{|w_-(x,t) - w_-(y,t)|^p}{|x-y|^{N+sp}}\,\dx\dy\dt\\\nonumber
	&\quad\le
	\boldsymbol\gm\int_{-\theta\rho_n^{sp}}^{0}\int_{K_n}\int_{K_n}\max\big\{w^p_{-}(x,t), w^p_{-}(y,t)\big\} \frac{|\z(x,t) - \z(y,t)|^p}{|x-y|^{N+sp}}\,\dx\dy\dt\\\nonumber
	&\qquad+\boldsymbol\gm\iint_{Q_n} \z^p w_{-}(x,t)\,\dx\dt \bigg(\essup_{\substack{x\in \widehat{K}_n}} \int_{\rn\setminus K_n}\frac{ w_{-}^{p-1}(y,t)}{|x-y|^{N+sp}}\,\dy\bigg)\\\label{Eq:energy-DG}
	&\qquad-2\iint_{Q_n}  \ell^{\prime}(t) \z^pw_{-}(x,t)\,\dx\dt + \iint_{Q_n} |\pl_t\z^p|w_{-}^2(x,t)\,\dx\dt.
\end{align}

The four terms on the right-hand side of \eqref{Eq:energy-DG} are treated as follows.
The first term is standard. Indeed, we estimate
\begin{align*}
\int_{-\theta\rho_n^{sp}}^{0}&\int_{K_n}\int_{K_n}\max\big\{w^p_{-}(x,t), w^p_{-}(y,t)\big\} \frac{|\z(x,t) - \z(y,t)|^p}{|x-y|^{N+sp}}\,\dx\dy\dt\\
&\le 2^{pn+1} \frac{(\xi\boldsymbol\om)^p}{\rho^{p}}\int_{-\theta\rho_n^{sp}}^{0}\int_{K_n}\int_{K_n} \frac{ \chi_{\{u(x,t)+\ell(t)<k_n\}} }{|x-y|^{N+(s-1)p}}\,\dx\dy\dt\\
&\le \boldsymbol \gm 2^{pn} \frac{(\xi\boldsymbol\om)^p}{\rho^{sp}} |A_n|,
\end{align*}
where we have defined $A_n:=\{u(x,t)+\ell(t)<k_n\}\cap Q_n$.
The last term is also standard, namely,
\[
\iint_{Q_n} |\pl_t\z^p |(u+\ell(t)-k_n)_{-}^2\,\dx\dt\le\frac{2^{spn}}{\theta\rho^{sp}} (\xi\boldsymbol\om)^2 |A_n|.
\]
 
The second term and the third, negative term need to be packed.  To this end, observe that when $|y|\ge \rho_n$ and $|x|\le \hat\rho_n$, there holds
\[
\frac{|y-x|}{|y|}\ge1-\frac{\hat\rho_n}{ \rho_n}=\frac14\Big(\frac{\rho_n-\rho_{n+1}}{\rho_n}\Big)\ge\frac1{2^{n+4}};
\]
when $|y|\ge R$ and $|x|\le \rho$, there holds
\[
\frac{|y-x|}{|y|}\ge1-\frac{\rho}{ R}\ge\frac12, 
\]
provided $\rho\le\frac12R$.
Consequently, using these observations and the fact that $u\ge 0$ a.e. in $\mathcal{Q}$, we estimate the second term as
\begin{align}\nonumber
&\iint_{Q_n}  \z^pw_{-}(x,t)\,\dx\dt \bigg[\essup_{\substack{x\in\widehat{K}_n}} \int_{\rn\setminus K_n}\frac{ w_{-}^{p-1}(y,t)}{|x-y|^{N+sp}}\,\dy\bigg]\\ \nonumber
&=\iint_{Q_n}  \z^pw_{-}(x,t)\,\dx\dt \essup_{\substack{x\in\widehat{K}_n}} \bigg[\int_{K_R\setminus K_n}\frac{ w_{-}^{p-1}(y,t)}{|x-y|^{N+sp}}\,\dy+\int_{\rn\setminus K_R}\frac{ w_{-}^{p-1}(y,t)}{|x-y|^{N+sp}}\,\dy\bigg]\\ \nonumber
&\le \boldsymbol\gm 2^{(N+sp)n} \iint_{Q_n}  \z^pw_{-}(x,t)\,\dx\dt \bigg[ \int_{K_R\setminus K_n}\frac{ w_{-}^{p-1}(y,t)}{|y|^{N+sp}}\,\dy\bigg]\\ \nonumber
&\quad+\boldsymbol\gm   \iint_{Q_n}  \z^pw_{-}(x,t)\,\dx\dt \bigg[ \int_{\rn\setminus K_R}\frac{ w_{-}^{p-1}(y,t)}{|y|^{N+sp}}\,\dy\bigg]\\ \nonumber
&\le \boldsymbol\gm 2^{(N+sp)n} \frac{(\xi\boldsymbol\om)^{p-1}}{\rho^{sp}} \iint_{Q_n}  \z^pw_{-}(x,t)\,\dx\dt \\ \nonumber
&\quad+\boldsymbol\gm   \iint_{Q_n}  \z^pw_{-}(x,t)\,\dx\dt \bigg[\frac{(\xi\boldsymbol\om)^{p-1}}{\rho^{sp}}+ \int_{\rn\setminus K_R}\frac{ u_{-}^{p-1}(y,t)}{|y|^{N+sp}}\,\dy\bigg]\\\nonumber
&\le \boldsymbol\gm 2^{(N+sp)n} \frac{(\xi\boldsymbol\om)^{p-1}}{\rho^{sp}} \iint_{Q_n}  \z^pw_{-}(x,t)\,\dx\dt\\ \nonumber
&\quad +
\boldsymbol\gm   \iint_{Q_n}  \z^pw_{-}(x,t)\,\dx\dt\int_{\rn\setminus K_R}\frac{ u_{-}^{p-1}(y,t)}{|y|^{N+sp}}\,\dy.
\end{align}
The last term in the above estimate will cancel with  the third, negative term on the right-hand side of the energy estimate \eqref{Eq:energy-DG},
if we choose $2\widetilde{\boldsymbol\gm}=\boldsymbol\gm$. As a result of this choice, the second and the third terms in \eqref{Eq:energy-DG} together are bounded by
\[
 \boldsymbol\gm 2^{(N+sp)n} \frac{(\xi\boldsymbol\om)^p}{\rho^{sp}} |A_n|.
\]

Collecting these estimates on the right-hand side of \eqref{Eq:energy-DG}, we arrive at
\begin{align*}
\essup_{-\theta\tilde{\rho}^{sp}_n<t<0}&\int_{\widetilde{K}_n} w_-^2\,\dx +
\int_{-\theta\tilde{\rho}^{sp}_n}^{0}\int_{\widetilde{K}_n}\int_{\widetilde{K}_n}  \frac{|w_-(x,t) - w_-(y,t)|^p}{|x-y|^{N+sp}}\,\dx\dy\dt\\
&\le \boldsymbol\gm 2^{(N+2p)n}\frac{(\xi\boldsymbol\om)^p}{\dl\rho^{sp}}|A_n|.
\end{align*}
Departing from here we can run a similar iteration scheme as in \cite[Lemma~3.1]{Liao-cvpd-24}, and conclude that, there exists $\nu$ depending only on the data $\{s, p, N, C_o, C_1\}$ and $\dl$, such that 
if
\[
\big|\big\{u(x,t)+\ell(t)\le \xi\boldsymbol\om\big\}\cap  Q_{\varrho}(\theta)\big|
	\le
	\nu|Q_{\varrho}(\theta)|,
\]
then
\[
u(x,t)+\ell(t)\ge\tfrac12 \xi\boldsymbol\om\quad
	\mbox{a.e.~in $ Q_{\frac{1}2\varrho}(\theta)$.}
\]
This implies that, if
$\ell(T_2)\le  \tfrac14\xi\boldsymbol\om$ and
if
\[
\big|\big\{u \le  \xi\boldsymbol\om\big\}\cap  Q_{\varrho}(\theta)\big|
	\le
	\nu|Q_{\varrho}(\theta)|,
\]
then
\[
u \ge\tfrac14 \xi\boldsymbol\om\quad
	\mbox{a.e.~in $ Q_{\frac{1}2\varrho}(\theta)$.}
\]
The proof is concluded by redefining $4\widetilde{\boldsymbol\gm}$ as $\widetilde{\boldsymbol\gm}$.
\end{proof}

When quantitative information is known at the initial level, we can propagate it without a time-lag.
\begin{lemma}\label{Lm:DG:initial:1}
Let $u$ be a locally bounded, local weak sub(super)-solution to \eqref{Eq:1:1} -- \eqref{Eq:K} in $E_T$, and let $\xi\in(0,1)$. 
There exist $\nu_o\in(0,1)$ and $\widetilde{\boldsymbol\gm}>1$ depending only on the data $\{s, p, N, C_o, C_1\}$ and independent of $\xi$, such that if
\[
\pm\big(\boldsymbol\mu^{\pm}-u(\cdot, t_o)\big)\ge \xi\boldsymbol\om \quad\text{ a.e. in } K_{\rho}(x_o),
\]
then either
\begin{equation*}
\widetilde{\boldsymbol\gm}
{\rm Tail}\big[\big(u - \boldsymbol \mu^{\pm}\big)_\pm; \mathcal{Q}\big]
>\xi\boldsymbol\om,
\end{equation*}
or
\[
\pm\big(\boldsymbol\mu^{\pm}-u\big)\ge \tfrac14\xi\boldsymbol\om\quad\text{ a.e. in }K_{\frac12\rho}(x_o)\times\big(t_o, t_o+\nu_o(\xi\boldsymbol\om)^{2-p}\rho^{sp}\big],
\]
provided the cylinders are included in $\mathcal{Q}$.
\end{lemma}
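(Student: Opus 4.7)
The strategy is to mirror the DeGiorgi iteration of Lemma~\ref{Lm:DG:1}, but run \emph{forward in time} and let the pointwise initial bound replace the measure-smallness hypothesis. Without loss of generality, treat the super-solution case with $\boldsymbol\mu^-=0$ and place the vertex at the origin, so that $u(\cdot,0)\geq\xi\boldsymbol\om$ a.e.~in $K_\rho$. Fix $\theta:=\nu_o(\xi\boldsymbol\om)^{2-p}$ with $\nu_o\in(0,1)$ to be chosen, and work on the forward cylinder $Q^+:=K_\rho\times(0,\theta\rho^{sp}]$. Borrow the shrinking radii $\rho_n$, intermediate radii, and \emph{purely spatial} cutoffs $\zeta_n\equiv 1$ on $K_{\rho_{n+1}}$ with $|\nabla\zeta_n|\lesssim 2^n/\rho$ from Lemma~\ref{Lm:DG:1}, together with the decreasing levels $k_n:=\tfrac12\xi\boldsymbol\om(1+2^{-n-1})\searrow\tfrac12\xi\boldsymbol\om$ and the time shift
\[
\ell(t):=\widetilde{\boldsymbol\gm}\int_0^t\!\!\int_{\rn\setminus K_R}\frac{u_-^{p-1}(y,\tau)}{|y|^{N+sp}}\,\dy\d\tau,\qquad w_-:=\big(u+\ell(t)-k_n\big)_-,
\]
with $\widetilde{\boldsymbol\gm}$ to be fixed.

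Apply Proposition~\ref{Prop:2:1} on $K_{\rho_n}\times(0,\theta\rho^{sp}]$ with this choice of $w_-$ and $\zeta_n$. The forward geometry brings two structural simplifications: (i) the initial trace $\int_{K_{\rho_n}}\zeta_n^p w_-^2\,\dx\big|_{t=0}$ vanishes, since $\ell(0)=0$ and $u(\cdot,0)\geq\xi\boldsymbol\om\geq k_n$; (ii) time-independence of $\zeta_n$ kills the $|\partial_t\zeta_n^p|$-term. The remaining right-hand-side contributions are handled exactly as in Lemma~\ref{Lm:DG:1}: split the interaction tail at $\pl K_R$, bound $w_-\leq\xi\boldsymbol\om$ on the annulus $K_R\setminus K_{\rho_n}$, use $w_-^{p-1}\leq u_-^{p-1}$ on $\rn\setminus K_R$, and fix $\widetilde{\boldsymbol\gm}$ large enough that the drift term $-2\iint k'(t)\zeta_n^pw_-\,\dx\dt$ cancels the uncontrolled exterior contribution. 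This yields
\[
\essup_{0<t\leq\theta\rho^{sp}}\int_{K_{\rho_{n+1}}}\!w_-^2\,\dx+\iint\!\!\int_{K_{\rho_{n+1}}}\!\frac{|w_-(x,t)-w_-(y,t)|^p}{|x-y|^{N+sp}}\,\dy\dx\dt\leq \boldsymbol\gm\, 2^{Cn}\frac{(\xi\boldsymbol\om)^p}{\rho^{sp}}|A_n|,
\]
where $A_n:=\{u+\ell<k_n\}\cap(K_{\rho_n}\times(0,\theta\rho^{sp}])$ and $C=C(N,p,s)$.

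Set $Y_n:=|A_n|/|Q^+|$. Combining the sup-bound with the fractional Gagliardo energy via Sobolev--Poincar\'e interpolation, and exploiting the gap $k_n-k_{n+1}\approx 2^{-n-2}\xi\boldsymbol\om$, converts the above into a geometric recursion
\[
Y_{n+1}\leq\boldsymbol\gm\, b^n\,\nu_o^{-\alpha}\,Y_n^{1+\kappa}
\]
with constants $b,\alpha,\kappa>0$ depending only on the data. Since $Y_0\leq 1$ is automatic, the fast-geometric-convergence lemma forces $Y_n\to 0$, provided $\nu_o$ is chosen small in terms of the data alone. Hence $u+\ell\geq\tfrac12\xi\boldsymbol\om$ a.e.~in $K_{\rho/2}\times(0,\nu_o(\xi\boldsymbol\om)^{2-p}\rho^{sp}]$; after enlarging $\widetilde{\boldsymbol\gm}$ by a factor of four as in the closing step of Lemma~\ref{Lm:DG:1}, the negation of the tail alternative gives $\ell(t)\leq\tfrac14\xi\boldsymbol\om$ on this interval, and therefore $u\geq\tfrac14\xi\boldsymbol\om$.

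The main delicacy I anticipate is verifying that the cancellation between $-2\iint k'(t)\zeta_n^pw_-$ and the exterior tail contribution proceeds with the same $\widetilde{\boldsymbol\gm}$ as in Lemma~\ref{Lm:DG:1}, despite the fact that the relevant time interval is now forward rather than backward; this is precisely what the time-dependent truncation from \cite{Kass-23} is designed for, and it should carry over verbatim. The genuinely new feature, by contrast, is that the vanishing initial trace supplies $Y_0\leq 1$ for free, which removes any smallness hypothesis on the initial distribution of $u$ from the recursion and lets $\nu_o$ be fixed in terms of the data alone, independently of $\xi$.
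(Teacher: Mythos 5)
Your proposal matches the paper's proof essentially verbatim: forward cylinders with purely spatial cutoffs, the same time-dependent shift $\ell(t)$ whose drift term is packed against the exterior tail contribution by fixing $\widetilde{\boldsymbol\gm}$, a vanishing initial trace (since $\ell(0)=0$ and $u(\cdot,0)\ge\xi\boldsymbol\om\ge k_n$) replacing the measure-smallness hypothesis, and $\nu_o$ extracted from the smallness of the forward time interval in the geometric recursion, followed by the same closing step of imposing $\ell\le\tfrac14\xi\boldsymbol\om$ and relabelling $4\widetilde{\boldsymbol\gm}$. The only quibble is the claim $w_-^{p-1}\le u_-^{p-1}$ on $\rn\setminus K_R$: in fact $w_-\le k_n+u_-$ there, so an additional $(\xi\boldsymbol\om)^{p-1}/R^{sp}$ contribution appears, but it is absorbed into the local term exactly as in the second-term estimate of Lemma~\ref{Lm:DG:1}.
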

\begin{proof}
Let us assume $(x_o,t_o)=(0,0)$ and show the case of super-solutions with $\boldsymbol\mu^-=0$. 
Introduce $k_n$,  $\rho_n$, $\tilde{\rho}_n$, $\hat{\rho}_n$, $\bar{\rho}_n$, $K_n$, $\widetilde{K}_n$, $\widehat{K}_n$ and $\overline{K}_n$ as in \cite[Lemma~3.1]{Liao-cvpd-24}. 
In addition, define the cylinders $Q_n=K_n\times(0,\theta\rho^{sp})$, $\widetilde{Q}_n=\widetilde{K}_n\times(0,\theta\rho^{sp})$, $\widehat{Q}_n=\widehat{K}_n\times(0,\theta\rho^{sp})$ and $\overline{Q}_n=\overline{K}_n\times(0,\theta\rho^{sp})$. 
The cutoff function $\z(x)$ in $K_n$ is chosen to vanish outside $\widehat{K}_n$, be equal to $1$ in $\widetilde{K}_n$, and satisfy $|D\z|\le 2^{n+4}/\rho$. The functions $w_-$ and $\ell(t)$ are the same as in Lemma~\ref{Lm:DG:1}.
With these choices, the energy estimate of Proposition~\ref{Prop:2:1} written in $Q_n$ becomes
 \begin{align}\nonumber
	\essup_{0<t<\theta\rho^{sp}}&\int_{\widetilde{K}_n} w^2_{-}(x,t)\,\dx+ \int^{\theta\rho^{sp}}_{0}\int_{\widetilde{K}_n}\int_{\widetilde{K}_n}  \frac{|w_-(x,t) - w_-(y,t)|^p}{|x-y|^{N+sp}}\,\dx\dy\dt\\\nonumber
	&\quad\le
	\boldsymbol\gm\int^{\theta\rho^{sp}}_{0}\int_{K_n}\int_{K_n}\max\big\{w^p_{-}(x,t), w^p_{-}(y,t)\big\} \frac{|\z(x,t) - \z(y,t)|^p}{|x-y|^{N+sp}}\,\dx\dy\dt\\\nonumber
	&\qquad+\boldsymbol\gm\iint_{Q_n} \z^p w_{-}(x,t)\,\dx\dt \bigg(\essup_{\substack{x\in \widehat{K}_n}} \int_{\rn\setminus K_n}\frac{ w_{-}^{p-1}(y,t)}{|x-y|^{N+sp}}\,\dy\bigg)\\\label{Eq:energy-DG-1}
	&\qquad-2\iint_{Q_n}  \ell^{\prime}(t) \z^pw_{-}(x,t)\,\dx\dt.
\end{align}
The right-hand side of \eqref{Eq:energy-DG-1} is treated as in Lemma~\ref{Lm:DG:1}. In this procedure, we select $\widetilde{\boldsymbol\gm}$ in the definition of $\ell(t)$, such that the second and the third terms on the right-hand side can be packed.   Consequently, we obtain that
 \begin{align*}
\essup_{0<t<\theta\rho^{sp} }&\int_{\widetilde{K}_n} w_-^2\,\dx +
\int^{\theta\rho^{sp}}_{0}\int_{\widetilde{K}_n}\int_{\widetilde{K}_n}  \frac{|w_-(x,t) - w_-(y,t)|^p}{|x-y|^{N+sp}}\,\dx\dy\dt\\
&\le \boldsymbol\gm 2^{(N+2p)n}\frac{(\xi\boldsymbol\om)^p}{\rho^{sp}}|A_n|,
\end{align*}
where  $A_n:=\{u(x,t)+\ell(t)<k_n\}\cap Q_n$. Using this energy estimate one can run the DeGiorgi iteration and obtain a constant $\nu_o\in(0,1)$ depending only on the data, such that
\[
u(x,t)+\ell(t)\ge\tfrac12 \xi\boldsymbol\om\quad
	\mbox{a.e.~in $ K_{\frac12\rho} \times\big(0,\nu_o(\xi\boldsymbol\om)^{2-p}\rho^{sp}\big]$.}
\]
This means that, if we impose $\ell(T_2)\le\tfrac14 \xi\boldsymbol\om$, then
\[
u \ge\tfrac14 \xi\boldsymbol\om\quad
	\mbox{a.e.~in $ K_{\frac12\rho} \times\big(0,\nu_o(\xi\boldsymbol\om)^{2-p}\rho^{sp}\big]$.}
\]
The proof is concluded by redefining $4\widetilde{\boldsymbol\gm}$ as $\widetilde{\boldsymbol\gm}$.
\end{proof}

The following lemma propagates measure theoretical information forward in time.

\begin{lemma}\label{Lm:3:1}
 Let $u$ be a locally bounded, local weak sub(super)-solution to \eqref{Eq:1:1} -- \eqref{Eq:K} in $E_T$.
Introduce parameters $\xi$ and $\al$ in $(0,1)$. There exist  $\dl,\,\varep\in(0,1)$ depending only on the data $\{s, p, N, C_o, C_1\}$ and $\al$, such that if
	\begin{equation*}
	\big|\big\{
		\pm\big(\boldsymbol \mu^{\pm}-u(\cdot, t_o)\big)\ge \xi\boldsymbol \om
		\big\}\cap K_{\varrho}(x_o)\big|
		\ge\al |K_{\varrho}|,
	\end{equation*}
	then 
	either 
$$ 
\frac1\dl 
{\rm Tail}\big[\big(u - \boldsymbol \mu^{\pm}\big)_\pm; \mathcal{Q}\big]
>\xi\boldsymbol\om,	
$$ 
	or
	\begin{equation*}
	\big|\big\{
	\pm\big(\boldsymbol \mu^{\pm}-u(\cdot, t)\big)\ge \varep \xi\boldsymbol \om\big\} \cap K_{\varrho}(x_o)\big|
	\ge\frac{\al}2 |K_\varrho|
	\quad\mbox{ for all $t\in\big(t_o,t_o+\dl(\xi\boldsymbol \om)^{2-p}\varrho^{sp}\big]$,}
\end{equation*}
provided this cylinder is included in $\mathcal{Q}$. Moreover, we have $\varep\approx\al$ and $\dl\approx\al^{p+N+1}$.
\end{lemma}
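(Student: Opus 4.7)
The plan is to reduce to the super-solution case with $\boldsymbol\mu^-=0$ and $(x_o,t_o)=(0,0)$, so the hypothesis becomes $|\{u(\cdot,0)\ge\xi\boldsymbol\om\}\cap K_\varrho|\ge\alpha|K_\varrho|$. Mirroring the device used in Lemmas~\ref{Lm:DG:1} and~\ref{Lm:DG:initial:1}, introduce the tail-absorbing shift
$$\ell(t):=\widetilde{\boldsymbol\gm}\int_0^t\int_{\rn\setminus K_R}\frac{u_-^{p-1}(y,\tau)}{|y|^{N+sp}}\,\dy\,\d\tau,$$
with $\widetilde{\boldsymbol\gm}$ depending on the data to be fixed below. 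A dichotomy appears: either $\ell(T_2)>\tfrac18\delta\xi\boldsymbol\om$, which after redefining $\widetilde{\boldsymbol\gm}/\delta$ as the constant in the statement yields the tail alternative; or $\ell(t)\le\tfrac18\delta\xi\boldsymbol\om$ throughout $\mathcal{Q}$. The remaining argument is carried out under this second branch.

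Apply Proposition~\ref{Prop:2:1} in $K_{(1+\eta)\varrho}\times(0,\Delta t]$, where $\Delta t:=\delta(\xi\boldsymbol\om)^{2-p}\varrho^{sp}$ and $\eta\in(0,1)$ are to be chosen, with the time-dependent truncation $k(t):=\xi\boldsymbol\om-\ell(t)$ and a \textit{time-independent} Lipschitz cutoff $\zeta(x)$ supported in $K_{(1+\eta)\varrho}$, identically $1$ on $K_\varrho$, satisfying $|D\zeta|\le 2(\eta\varrho)^{-1}$. Because $\zeta$ is time-independent, the $|\pl_t\zeta^p|$ term in Proposition~\ref{Prop:2:1} drops out. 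Exactly as in the proof of Lemma~\ref{Lm:DG:1}, the nonlocal tail interaction term (second term on the RHS) splits into a near-field piece over $K_R\setminus K_{(1+\eta)\varrho}$ controlled by $(\xi\boldsymbol\om)^{p-1}(\eta\varrho)^{-sp}$, and a far-field piece over $\rn\setminus K_R$ that is packed with the $\ell'(t)$ contribution (for which the sign in Proposition~\ref{Prop:2:1} is favourable under this choice of $k$) upon fixing $\widetilde{\boldsymbol\gm}$ sufficiently large. What survives is the energy inequality
$$\int_{K_\varrho}w_-^2(x,t)\,\dx\le\int_{K_{(1+\eta)\varrho}}w_-^2(x,0)\,\dx+\boldsymbol\gm\frac{(\xi\boldsymbol\om)^p\Delta t}{(\eta\varrho)^{sp}}|K_\varrho|,\qquad t\in(0,\Delta t],$$
where $w_-(x,t):=(\xi\boldsymbol\om-\ell(t)-u(x,t))_+$.

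For the initial slice, the hypothesis and $\ell(0)=0$ ensure $w_-(\cdot,0)=0$ on a subset of $K_\varrho$ of measure at least $\alpha|K_\varrho|$; on the complement inside $K_{(1+\eta)\varrho}$ the crude bound $w_-\le\xi\boldsymbol\om$ applies, yielding $\int_{K_{(1+\eta)\varrho}}w_-^2(\cdot,0)\,\dx\le(\xi\boldsymbol\om)^2[(1-\alpha)+\boldsymbol\gm\eta]|K_\varrho|$. For the lower bound, argue by contradiction: assume some $t^*\in(0,\Delta t]$ violates the target conclusion, i.e.\ $|\{u(\cdot,t^*)<\varepsilon\xi\boldsymbol\om\}\cap K_\varrho|>(1-\alpha/2)|K_\varrho|$. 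On this bad set $u+\ell\le(\varepsilon+\tfrac18\delta)\xi\boldsymbol\om$, hence $w_-(x,t^*)\ge(1-\varepsilon-\tfrac18\delta)\xi\boldsymbol\om$, giving
$$\int_{K_\varrho}w_-^2(x,t^*)\,\dx\ge(\xi\boldsymbol\om)^2\big(1-\varepsilon-\tfrac{\delta}{8}\big)^2\big(1-\tfrac{\alpha}{2}\big)|K_\varrho|.$$
Feeding this into the energy inequality and dividing by $(\xi\boldsymbol\om)^2|K_\varrho|$ yields an inequality that is violated once $\varepsilon,\eta$ are of order $\alpha$ and $\delta$ is chosen small enough in terms of $\alpha$. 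Tracking constants produces the announced dependence $\varepsilon\approx\alpha$ and $\delta\approx\alpha^{p+N+1}$.

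The main obstacle is the simultaneous absorption of the nonlocal tail term and the $\ell'(t)$ contribution via packing: the coefficient $\widetilde{\boldsymbol\gm}$ must be tuned precisely so that these two terms compensate and leave behind only the manageable near-field piece. A secondary point is the careful bookkeeping of how $\eta$, the boundary-layer factor $\boldsymbol\gm\eta|K_\varrho|$ coming from $K_{(1+\eta)\varrho}\setminus K_\varrho$, and $\Delta t$ couple in the final contradiction inequality, since it is precisely this coupling that determines the quantitative exponent $p+N+1$ for $\delta$.
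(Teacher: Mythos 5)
Your proposal is correct in substance and follows the same De Giorgi/DiBenedetto skeleton as the paper's proof (which defers to \cite[Lemma~3.3]{Liao-cvpd-24}): energy estimate for a truncation at level $\approx\xi\boldsymbol\om$ with a time-independent spatial cutoff, an $L^2$ balance between the initial slice and a putative bad time slice, and a choice $\varep,\eta\approx\al$, $\dl\approx\al^{p+N+1}$ to close the contradiction. The one genuine divergence is how the far-field tail is handled. You import the time-dependent truncation $k(t)=\xi\boldsymbol\om-\ell(t)$ and pack the $\rn\setminus K_R$ contribution against the favourable $-2\iint\ell'\z^pw_-$ term, with the dichotomy driven by the size of $\ell(T_2)$. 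The paper instead keeps the truncation level constant, $w_-=(u-\xi\boldsymbol\om)_-$, and bounds the far-field piece directly by enforcing the alternative $\tfrac1\dl\int_{T_1}^{T_2}\int_{\rn\setminus K_R}u_-^{p-1}|y|^{-N-sp}\,\dy\dt\le\xi\boldsymbol\om$, which immediately yields the bound $\boldsymbol\gm\dl(\xi\boldsymbol\om)^2|K_\rho|\sig^{-N-sp}$ for the whole nonlocal term. The direct route is simpler here precisely because the conclusion's time interval already carries the small factor $\dl$, so no cancellation is needed; your $\ell(t)$-packing is the mechanism the paper reserves for Lemmas~\ref{Lm:DG:1} and~\ref{Lm:DG:initial:1}, where no such $\dl$ is available. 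Both routes give the same tail alternative after relabelling $\dl$ by a data-dependent factor, and both preserve $\dl\approx\al^{p+N+1}$.

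Two small bookkeeping points. First, your cutoff expands outward ($\z\equiv1$ on $K_\varrho$, supported in $K_{(1+\eta)\varrho}$), which uses information on a cylinder over $K_{(1+\eta)\varrho}$ that the hypothesis ``provided this cylinder is included in $\mathcal{Q}$'' does not grant; the standard fix is to shrink inward ($\z\equiv1$ on $K_{(1-\eta)\varrho}$) and absorb the lost annulus, of measure $\approx N\eta|K_\varrho|$, into the $\al/2$ defect. Second, the Gagliardo-type term is of order $(\xi\boldsymbol\om)^p\eta^{-p}\varrho^{-sp}\Delta t\,|K_\varrho|$ rather than $(\eta\varrho)^{-sp}$, and the near-field tail contributes $\eta^{-(N+sp)}$; it is the latter that accounts for the exponent $N+p$ on $\al$ in $\dl$. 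Neither affects the validity of the argument.
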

\begin{proof}
Assuming $(x_o,t_o)=(0,0)$, one shows the case of super-solution with $\boldsymbol \mu^-=0$.
The argument runs exactly like in \cite[Lemma~3.3]{Liao-cvpd-24}. One first writes down the energy estimate for the truncation $w_-=(u-\xi\boldsymbol \om)_-$ in $Q=K_\rho\times(0, \dl(\xi\boldsymbol \om)^{2-p}\varrho^{sp}]$ and with a properly chosen cutoff function $\z(x)$ as in \cite[Lemma~3.3]{Liao-cvpd-24}.
The only difference is the second term on the right-hand side of the energy estimate. In fact, with the same notation and $\z$, we can estimate
\begin{align*}
\iint_{Q} & \z^p w_{-}(x,t)\,\dx\dt \bigg(\essup_{\substack{x\in\supp\z}} \int_{\rn\setminus K_\rho}\frac{ w_{-}^{p-1}(y,t)}{|x-y|^{N+sp}}\,\dy\bigg)\\
&\le \boldsymbol\gm  \frac{\xi\boldsymbol\om |K_\rho|}{\sig^{N+sp}}  \int_0^{\dl(\xi\boldsymbol \om)^{2-p}\varrho^{sp}}\int_{\rn\setminus K_\rho}\frac{ w_{-}^{p-1}(y,t)}{|y|^{N+sp}}\,\dy \dt \\
&= \boldsymbol\gm  \frac{\xi\boldsymbol\om |K_\rho|}{\sig^{N+sp}} \int_0^{\dl(\xi\boldsymbol \om)^{2-p}\varrho^{sp}}\bigg(\int_{K_R\setminus K_\rho}\frac{ w_{-}^{p-1}(y,t)}{|y|^{N+sp}}\,\dy +  \int_{\rn\setminus K_R}\frac{ w_{-}^{p-1}(y,t)}{|y|^{N+sp}}\,\dy\bigg)\dt\\
&\le \boldsymbol\gm  \frac{\xi\boldsymbol\om |K_\rho|}{\sig^{N+sp}} \bigg(\boldsymbol\gm  \dl\xi\boldsymbol\om +\int_0^{\dl(\xi\boldsymbol \om)^{2-p}\varrho^{sp}} \int_{\rn\setminus K_R}\frac{ u_{-}^{p-1}(y,t)}{|y|^{N+sp}}\,\dy\dt\bigg)\\
&\le \boldsymbol\gm  \frac{\dl(\xi\boldsymbol\om)^2 |K_\rho|}{\sig^{N+sp} }.  
\end{align*}
In the last line, we enforced 
\[
\frac1\dl \int^{T_2}_{T_1}\int_{\rn\setminus K_R}\frac{ u_{-}^{p-1}(y,t)}{|y|^{N+sp}}\,\dy\dt\le\xi\boldsymbol\om.
\]
Now, 
we end up with 
the same estimate as in \cite[Lemma~3.3]{Liao-cvpd-24}. Consequently, the selection of parameters $\dl$ and $\varep$ can be done analogously.
 \end{proof}

The measure shrinking lemma can also be readily derived. For simplicity, the vertex $(x_o,t_o)$ is omitted from $Q_\rho(\theta)$.  
\begin{lemma}\label{Lm:3:2}
 Let $u$ be a locally bounded, local weak sub(super)-solution to \eqref{Eq:1:1} -- \eqref{Eq:K} in $E_T$.
 For  some $\dl$, $\sig$ and $\xi$ in $(0,\tfrac12)$, let $\theta=\dl(\sig\xi\boldsymbol \om)^{2-p}$. Suppose that
	\begin{equation*}
	\big|\big\{
		\pm\big(\boldsymbol \mu^{\pm}-u(\cdot, t)\big)\ge \xi\boldsymbol \om
		\big\}\cap K_{\varrho}(x_o)\big|
		\ge\al \big|K_{\varrho}\big|\quad\mbox{ for all $t\in\big(t_o-\theta\varrho^{sp}, t_o\big]$.}
	\end{equation*}
There exists
 $\boldsymbol \gm>1$ depending only on the data $\{s, p, N, C_o, C_1\}$ and independent of $\{\al, \dl, \sig,\xi\}$,  such that 
  either 
  $$ 
\frac1\dl  
{\rm Tail}\big[\big(u - \boldsymbol \mu^{\pm}\big)_\pm; \mathcal{Q}\big]
>\sig\xi\boldsymbol\om,
  $$ 
  or
\begin{equation*}
	\big|\big\{
	\pm\big(\boldsymbol \mu^{\pm}-u\big)\le \sig \xi\boldsymbol \om \big\}\cap Q_{\rho}(\theta)\big|
	\le \boldsymbol\gm \frac{\sig^{p-1}}{\dl\al} |Q_{\rho}(\theta)|,
\end{equation*}
provided $Q_{2\rho}(\theta)$ is included in $\mathcal{Q}$.
\end{lemma}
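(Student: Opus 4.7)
The plan is to mimic the measure-shrinking argument of \cite[Section~3]{Liao-cvpd-24}; the only genuinely new input is how the far tail is absorbed via the alternative hypothesis. Without loss of generality, I take the super-solution case with $\boldsymbol\mu^-=0$ and $(x_o,t_o)=(0,0)$.

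First, I invoke Proposition~\ref{Prop:2:1} on the cylinder $Q_{2\rho}(\theta)$ with the constant truncation $k(t)\equiv\sigma\xi\boldsymbol\om$, so that the new $k'(t)$-term vanishes. Choose $\zeta(x)$ to be a standard spatial cutoff, identically $1$ on $K_\rho$, supported in $K_{2\rho}$, with $|D\zeta|\le\boldsymbol\gm/\rho$. With $w_-:=(u-\sigma\xi\boldsymbol\om)_-\le\sigma\xi\boldsymbol\om$ in $\mathcal{Q}$, both the Caccioppoli cutoff term and the inner-tail contribution over $y\in K_R\setminus K_{2\rho}$ are bounded by $\boldsymbol\gm(\sigma\xi\boldsymbol\om)^p\rho^{-sp}|Q_\rho(\theta)|$.

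Second, the decisive piece is the far tail $y\in\rn\setminus K_R$. The pointwise estimate $w_-(y,t)\le\sigma\xi\boldsymbol\om+u_-(y,t)$ combined with $(a+b)^{p-1}\le\boldsymbol\gm(a^{p-1}+b^{p-1})$ splits this contribution into a clean piece of order $(\sigma\xi\boldsymbol\om)^p\rho^{-sp}|Q_\rho(\theta)|$ plus a piece controlled by $\sigma\xi\boldsymbol\om\,|K_\rho|\,{\rm Tail}[u_-;\mathcal{Q}]$. Invoking the alternative $\delta^{-1}{\rm Tail}[u_-;\mathcal{Q}]\le\sigma\xi\boldsymbol\om$ bounds the latter by $\delta\sigma(\xi\boldsymbol\om)^2|K_\rho|$, which, in view of the chosen scaling $\theta=\delta(\sigma\xi\boldsymbol\om)^{2-p}$, is exactly of order $(\sigma\xi\boldsymbol\om)^p\rho^{-sp}|Q_\rho(\theta)|$ and therefore absorbs. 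Consolidating, the spatial Gagliardo seminorm of $w_-$ integrated over $(-\theta\rho^{sp},0)$ is bounded by $\boldsymbol\gm(\sigma\xi\boldsymbol\om)^p\rho^{-sp}|Q_\rho(\theta)|$.

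Third, the measure density hypothesis gives $|\{w_-(\cdot,t)=0\}\cap K_\rho|\ge\alpha|K_\rho|$ at every $t$, since $\sigma<\tfrac12$ implies $\{u\ge\xi\boldsymbol\om\}\subset\{u\ge\sigma\xi\boldsymbol\om\}$. A fractional Poincar\'e inequality on sets with positive measure density then produces $\int_{K_\rho}w_-^p\,\dx\le\boldsymbol\gm\alpha^{-1}\rho^{sp}$ times the spatial Gagliardo seminorm of $w_-(\cdot,t)$. Integrating in $t$ and combining with the energy bound yields $\iint_{Q_\rho(\theta)}w_-^p\,\dx\dt\le\boldsymbol\gm\alpha^{-1}(\sigma\xi\boldsymbol\om)^p|Q_\rho(\theta)|$.

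The main obstacle will be extracting the precise $\sigma^{p-1}/\delta$ factor in the conclusion. A naive level-set comparison using $w_-\ge\tfrac12\sigma\xi\boldsymbol\om$ on $\{u\le\tfrac12\sigma\xi\boldsymbol\om\}$ only delivers a bound of order $\boldsymbol\gm\alpha^{-1}|Q_\rho(\theta)|$ for the smaller set. The missing $\sigma^{p-1}/\delta$ gain must be recovered by feeding this preliminary $L^p$-bound back into the energy estimate at a second, lower truncation and iterating once, with the extra $\sigma$-power entering through the ratio of truncation levels and through the precise form of $\theta=\delta(\sigma\xi\boldsymbol\om)^{2-p}$; this parallels the measure-shrinking iteration in \cite[Section~3]{Liao-cvpd-24}. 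The remaining ingredients are routine.
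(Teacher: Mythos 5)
There is a genuine gap here, and you have in fact located it yourself: your argument does not produce the factor $\sig^{p-1}/\dl$, which is the entire point of the lemma (in Proposition~\ref{Prop:expansion} one chooses $\sig$ small precisely so that $\boldsymbol\gm\sig^{p-1}/(\dl\al)<\nu$; a bound of order $\boldsymbol\gm/\al$ is vacuous). The structural reason your route cannot recover it is that the fractional Poincar\'e inequality with measure density only uses the information that $w_-(\cdot,t)$ vanishes on a set of measure $\ge\al|K_\rho|$; it is blind to the fact that on that set $u$ exceeds the much higher level $\xi\boldsymbol\om$, whereas the truncation sits at the low level $\sig\xi\boldsymbol\om$. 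The gain $\sig^{p-1}$ comes exactly from the gap between these two levels, so re-feeding the resulting $L^p$ bound into the same energy estimate (which again only sees the level $\sig\xi\boldsymbol\om$ or a lower one) cannot create it; ``iterating once at a lower truncation'' is not a proof. The mechanism used in the paper (inherited from \cite[Lemma~3.4]{Liao-cvpd-24}) is different: it retains the good nonlocal term on the \emph{left-hand side} of Proposition~\ref{Prop:2:1}, namely $\iint\z^pw_-(x,t)\big(\int_{K_R}w_+^{p-1}(y,t)|x-y|^{-N-sp}\,\dy\big)\dx\dt$. The measure hypothesis gives $w_+=(u-\sig\xi\boldsymbol\om)_+\ge(1-\sig)\xi\boldsymbol\om\ge\tfrac12\xi\boldsymbol\om$ on a set of measure $\ge\al|K_\rho|$ at each time, so the inner integral is bounded below by $\boldsymbol\gm^{-1}\al(\xi\boldsymbol\om)^{p-1}\rho^{-sp}$ for $x\in K_\rho$, while $w_-$ is bounded below by a multiple of $\sig\xi\boldsymbol\om$ on the target set. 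Comparing this lower bound with a right-hand side of order $\dl^{-1}(\sig\xi\boldsymbol\om)^p\rho^{-sp}|Q_\rho(\theta)|$ yields precisely $\boldsymbol\gm\sig^{p-1}/(\dl\al)$.

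A secondary slip: your consolidated right-hand side omits the initial-time term $\int\z^pw_-^2(\cdot,t_o-\theta\rho^{sp})\,\dx\le(\sig\xi\boldsymbol\om)^2|K_{2\rho}|$, which must be moved to the right because no pointwise information is assumed at the bottom of the cylinder; it is in fact the dominant contribution and the source of the $1/\dl$ in the conclusion. On the other hand, your treatment of the far tail via the alternative hypothesis is correct and coincides with the paper's.
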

\begin{proof}
Assuming $(x_o,t_o)=(0,0)$, one shows the case of super-solution with $\boldsymbol \mu^-=0$.
The argument runs exactly like in \cite[Lemma~3.4]{Liao-cvpd-24}. 
One first writes down the energy estimate for the truncation $w_-=(u-\sig\xi\boldsymbol \om)_-$ in $K_{2\rho}\times(-\theta\varrho^{sp},0]$ and with a properly chosen cutoff function $\z(x)$. The only difference is the second term on the right-hand side of the energy estimate. 
With the same notation and $\z$ as in \cite[Lemma~3.4]{Liao-cvpd-24}, we can estimate
\begin{align*}
\int_{-\theta\rho^{sp}}^0\int_{K_{2\rho}} &\z^p w_{-}(x,t)\,\dx\dt \bigg(\essup_{\substack{x\in K_{\frac32\rho} }} \int_{\rn\setminus K_{2\rho}}\frac{ w_{-}^{p-1}(y,t)}{|x-y|^{N+sp}}\,\dy\bigg)\\
&\le \boldsymbol\gm(\sig\xi\boldsymbol \om) |K_{2\rho} | \bigg(\int_{-\theta\rho^{sp}}^0 \int_{\rn\setminus K_{2\rho}}\frac{ w_{-}^{p-1}(y,t)}{|y|^{N+sp}}\,\dy\bigg)\\
&\le \boldsymbol\gm(\sig\xi\boldsymbol \om) |K_{2\rho}| \bigg(\boldsymbol\gm\dl(\sig\xi\boldsymbol \om) +\int_{-\theta\rho^{sp}}^0 \int_{\rn\setminus K_{R}}\frac{ u_{-}^{p-1}(y,t)}{|y|^{N+sp}}\,\dy\dt\bigg)\\
&\le \boldsymbol\gm\dl(\sig\xi\boldsymbol \om)^2 |K_{2\rho}|=2^N\boldsymbol\gm\frac{(\sig\xi\boldsymbol \om)^p}{\rho^{sp}}|Q_{\rho}(\theta)|.
\end{align*}
To obtain the last line, we enforced 
\[
\frac1\dl \int^{T_2}_{T_1}\int_{\rn\setminus K_R}\frac{ u_{-}^{p-1}(y,t)}{|y|^{N+sp}}\,\dy\dt\le\sig\xi\boldsymbol\om.
\]
Now,  
we end up with the same estimate as in \cite[Lemma~3.4]{Liao-cvpd-24}. 
Therefore, we can conclude the proof just like in \cite{Liao-cvpd-24}.
\end{proof}


\section{Proof of Theorem~\ref{Thm:1:1}: $1<p\le 2$}

\subsection{Expansion of Positivity}
Let the cylinder $\mathcal{Q}$ and the numbers $\boldsymbol \mu^{\pm}$ and $\boldsymbol \om$ be defined as in Section~\ref{S:prelim}.   The following  expansion of positivity is in order. The main difference from \cite[Proposition~4.1]{Liao-cvpd-24} lies in the tail alternative.
\begin{proposition}\label{Prop:expansion}
Let $u$ be a locally bounded, local, weak sub(super)-solution to \eqref{Eq:1:1} -- \eqref{Eq:K} in $E_T$, with $1<p\le2$.
Suppose for some constants  $\al,\xi \in(0,1)$, there holds
	\begin{equation*}
		\big|\big\{\pm\big(\boldsymbol \mu^{\pm}-u(\cdot, t_o)\big)\ge \xi\boldsymbol\om \big\}\cap K_{\varrho}(x_o) \big|
		\ge
		\al |K_\varrho |.
	\end{equation*}
There exist a constant $\widetilde{\boldsymbol\gm}>1$ depending only on the data $\{s, p, N, C_o, C_1\}$ and   constants $\dl,\,\eta\in(0,1)$ depending on the data and $\al$, such that 
either
\begin{equation*}
\widetilde{\boldsymbol\gm}
{\rm Tail}\big[\big(u - \boldsymbol \mu^{\pm}\big)_\pm; \mathcal{Q}\big]
>\eta\xi\boldsymbol\om,
\end{equation*}
 or
\begin{equation*}
	\pm\big(\boldsymbol \mu^{\pm}-u\big)\ge\eta\xi\boldsymbol\om
	\quad
	\mbox{a.e.~in $ K_{2\varrho}(x_o) \times\big( t_o+\tfrac12 \dl (\xi\boldsymbol\om)^{2-p}\varrho^{sp},
	t_o+\dl (\xi\boldsymbol\om)^{2-p}\varrho^{sp}\big],$}
\end{equation*}
provided 
\[
K_{4\rho}(x_o)\times\big(t_o, t_o+\dl (\xi\boldsymbol\om)^{2-p}\varrho^{sp}\big]\subset \mathcal{Q}.
\]
Moreover,   $\dl\approx\al^{p+N+1}$ and $\eta\approx\al^q$ for some $q>1$ depending on the data $\{s, p, N, C_o, C_1\}$.
\end{proposition}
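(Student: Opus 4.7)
My approach is the classical three-step expansion-of-positivity scheme, chaining the preliminary tools of Section~\ref{S:prelim}: propagate the assumed measure information forward in time via Lemma~\ref{Lm:3:1}; use the resulting uniform-in-time measure bound together with the shrinking Lemma~\ref{Lm:3:2} on a slightly enlarged ball to bound the measure of the small-value set on an intrinsic space-time cylinder; and finally, invoke the DeGiorgi Lemma~\ref{Lm:DG:1} to upgrade smallness to a pointwise lower bound on the asserted sub-cylinder. The hypothesis $p\le 2$ enters only through parameter matching between these steps: factors of the form $(\sig\varep\xi\boldsymbol\om)^{2-p}$ with $\sig,\varep,\xi\in(0,1)$ stay bounded by $\boldsymbol\om^{2-p}$ and therefore do not force any intermediate parameter to degenerate as $\sig\to 0$.

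For Step~1, I apply Lemma~\ref{Lm:3:1} to the assumed measure inequality at $t_o$, producing $\dl_1,\varep_1\in(0,1)$ with $\dl_1\approx\al^{p+N+1}$ and $\varep_1\approx\al$ such that either the tail alternative at level $\xi\boldsymbol\om$ already implies the conclusion, or
\[
\big|\{\pm(\boldsymbol\mu^\pm-u(\cdot,t))\ge\varep_1\xi\boldsymbol\om\}\cap K_\rho(x_o)\big|\ge\tfrac{\al}{2}|K_\rho|\qquad\text{for all } t\in\bigl(t_o,\,t_o+\dl_1(\xi\boldsymbol\om)^{2-p}\rho^{sp}\bigr].
\]
This density persists, with an $N$-dimensional loss, on every larger concentric ball $K_{c\rho}(x_o)$ with $c\ge 1$ for the same time window.

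For Steps 2 and 3, I pick an intermediate radius of order $2\rho$ (with the necessary room of order $4\rho$ coming from the assumption $K_{4\rho}\subset\mathcal{Q}$) and parameters $\sig$ (small) and $\dl_2$ so that the cylinder $Q_{\cdot}(\theta)$ of Lemma~\ref{Lm:3:2}, with $\theta=\dl_2(\sig\varep_1\xi\boldsymbol\om)^{2-p}$, sits at the top of the Step-1 propagation window. In the singular regime $p\le 2$ this matching is automatic, since $(\sig\varep_1)^{2-p}\le 1$ so that $\dl_2$ can be taken comparable to $\dl_1$ without absorbing extra powers of $\sig$. The lemma then yields, modulo a second tail alternative at level $\sig\varep_1\xi\boldsymbol\om$, a density bound of the form $\boldsymbol\gm\,\sig^{p-1}/(\dl_2\al)\cdot|\text{cylinder}|$. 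I next choose $\sig$ as a suitable power of $\al$ so that this density lies below the DeGiorgi threshold $\nu\approx\dl_2^q$ of Lemma~\ref{Lm:DG:1}; applying that lemma produces, up to a third tail alternative, the pointwise bound $\pm(\boldsymbol\mu^\pm-u)\ge\tfrac14\sig\varep_1\xi\boldsymbol\om$ on the asserted sub-cylinder. Setting $\dl:=\dl_1$ and $\eta:=\tfrac14\sig\varep_1$ gives the claim, with $\eta\approx\al^q$ for some $q>1$ reflecting the compounded powers of $\al$.

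The main technical obstacle is not any single step but the coalescing of the three tail alternatives — at levels $\xi\boldsymbol\om$, $\varep_1\xi\boldsymbol\om$, and $\sig\varep_1\xi\boldsymbol\om$ — into the single alternative $\widetilde{\boldsymbol\gm}\,\mathrm{Tail}[(u-\boldsymbol\mu^\pm)_\pm;\mathcal{Q}]>\eta\xi\boldsymbol\om$ of the statement. Fortunately this collapses cleanly, because $\eta\le\sig\varep_1\le\varep_1\le 1$ so the negation of the target alternative (tail $\le\eta\xi\boldsymbol\om/\widetilde{\boldsymbol\gm}$) automatically negates each intermediate one, once $\widetilde{\boldsymbol\gm}$ is chosen as the maximum of the three data-only constants supplied by Lemmas~\ref{Lm:DG:1}, \ref{Lm:3:1}, and \ref{Lm:3:2}. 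In particular, $\widetilde{\boldsymbol\gm}$ remains independent of $\al$, as required.
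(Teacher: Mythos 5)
Your proposal follows essentially the same route as the paper: propagate the initial density forward with Lemma~\ref{Lm:3:1}, shrink the small-value set with Lemma~\ref{Lm:3:2} on an intrinsic cylinder whose parameter matching uses $p\le2$ exactly as you describe, beat the DeGiorgi threshold $\nu$ by choosing $\sig$ as a power of $\al$, and set $\eta=\tfrac14\sig\varep_1$ while absorbing the three tail alternatives into one via $\eta\le\sig\varep_1\le\varep_1$. The only bookkeeping difference is that the paper runs Lemmas~\ref{Lm:3:2} and~\ref{Lm:DG:1} at radius $4\varrho$ (rewriting the initial measure information there with $\al\mapsto4^{-N}\al$, and sweeping the vertex $\bar t$ over the upper half of the propagation window) so that the half-radius conclusion of Lemma~\ref{Lm:DG:1} lands on $K_{2\varrho}$ over the full asserted time interval; your ``radius of order $2\rho$'' would only yield the bound on $K_\varrho$, but you correctly note the room of order $4\rho$ is what the hypothesis provides.
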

\begin{proof}
Assuming $(x_o,t_o)=(0,0)$ and $\boldsymbol\mu^{-}=0$ for simplicity, it suffices to deal with super-solutions.
 Rewriting the measure theoretical information at the initial time $t_o=0$ in the larger ball $K_{4\rho}$ and replacing $\al$ by $4^{-N}\al$, we 
 apply Lemma~\ref{Lm:3:1} to obtain $\dl, \varep\in(0,1)$ depending only on the data $\{s, p, N, C_o, C_1\}$ and $\al$, such that 
	\begin{equation*}
	\big|\big\{
	 u(\cdot, t) \ge \varep \xi\boldsymbol \om\big\} \cap K_{4\varrho} \big|
	\ge\frac{\al}2 4^{-N} |K_{4\varrho}|
	\quad\mbox{ for all $t\in\big(0, \dl(\xi\boldsymbol \om)^{2-p}(4\varrho)^{sp}\big]$,}
\end{equation*}
provided we enforce that
\[
\frac1{\dl}
{\rm Tail} ( u_-; \mathcal{Q})
\le\xi\boldsymbol\om.
\]

This measure theoretical information for each slice of the time interval in turn allows us to apply Lemma~\ref{Lm:3:2} in the cylinders $(0,\bar{t})+Q_{4\rho}(\tfrac12\dl(\sig\varep\xi\boldsymbol \om)^{2-p})$ with an arbitrary 
\begin{equation}\label{Eq:t-bar}
\bar{t}\in \big(\tfrac12\dl(\sig\varep\xi\boldsymbol \om)^{2-p}(4\varrho)^{sp}, \dl(\xi\boldsymbol \om)^{2-p}(4\varrho)^{sp}\big],
\end{equation}
and with $\xi$ and $\al$ there replaced by $\varep\xi$ and $\tfrac12 4^{-N}\al$. This is viable because $\sig\in(0,1)$ and  $\dl(\sig\varep\xi\boldsymbol \om)^{2-p}\le \dl(\xi\boldsymbol \om)^{2-p}$; consequently, we have  
\[
(0,\bar{t})+Q_{4\rho}(\tfrac12\dl(\sig\varep\xi\boldsymbol \om)^{2-p})\subset K_{4\rho}\times\big(0, \dl(\xi\boldsymbol \om)^{2-p}(4\varrho)^{sp}\big]
\]
when $\bar{t}$ ranges over the interval in \eqref{Eq:t-bar}.
Note also this step used the fact that $p\le2$.

Letting $\nu$ be determined in Lemma~\ref{Lm:DG:1}  in terms of the data and $\dl$, we further choose $\sig$ according to Lemma~\ref{Lm:3:2} to satisfy
\[
\boldsymbol\gm \frac{\sig^{p-1}}{\dl\al} <\nu, \quad\text{i.e.}\quad \sig\le\Big(\frac{\nu\dl\al}{\boldsymbol\gm}\Big)^{\frac1{p-1}}.
\]
This choice is possible because $\boldsymbol\gm$ of Lemma~\ref{Lm:3:2} is independent of $\sig$.
Letting $\widetilde{\boldsymbol\gm}$ be chosen in Lemma~\ref{Lm:DG:1} and further enforcing
\begin{equation*}
	\max\Big\{\widetilde{\boldsymbol\gm},\frac1{\dl}\Big\}
	{\rm Tail}(u_-; \mathcal{Q})
	 \le \sig\varep \xi \boldsymbol\om,
\end{equation*}
such a choice of $\sig$ permits us to first apply Lemma~\ref{Lm:3:2} and then Lemma~\ref{Lm:DG:1} in the cylinders $(0,\bar{t})+Q_{4\rho}(\tfrac12\dl(\sig\varep\xi\boldsymbol \om)^{2-p})$ with an arbitrary $\bar{t}$ as in \eqref{Eq:t-bar},
and with $\xi$ there replaced by $\sig\varep\xi$. Therefore, by arbitrariness of $\bar{t}$ we conclude that
\[
u\ge\tfrac14\sig\varep\xi\boldsymbol\om\quad\text{a.e. in}\>K_{2\varrho}  \times\big(\tfrac12 \dl(\sig\varep\xi\boldsymbol\om)^{2-p}(4\rho)^{sp}, \dl (\xi\boldsymbol\om)^{2-p}(4\varrho)^{sp}\big].
\]
The proof is completed by defining $\eta=\tfrac14\sig\varep$ and relabelling $\max\{\widetilde{\boldsymbol\gm},1/\dl\}$ as $\widetilde{\boldsymbol\gm}$. 
\end{proof}
\begin{remark}\upshape
For local operators, a result like Proposition~\ref{Prop:expansion} requires more involved techniques, cf.~\cite[Chap.~IV, Sect.~14]{DB}, \cite[Chap.~4, Sect.~5]{DBGV-mono}.
\end{remark}
Resting upon Proposition~\ref{Prop:expansion}, we are ready to prove Theorem~\ref{Thm:1:1} for $1<p\le 2$. It is noteworthy that all estimates in the proof are stable as $p\to2$. 
\subsection{The First Step} 
Consider the following cylinders at $(x_o,t_o)$:
$$
Q_o:=Q_R(\boldsymbol\om^{2-p})\subset 
Q_{\widetilde{R}}
\subset E_T
$$ 
with positive parameters $\widetilde{R}>R$ and $\boldsymbol\om$ satisfying 
\begin{equation}\label{Eq:intr-cyl}
\boldsymbol\om\ge 2\essup_{Q_{\widetilde{R}}} |u| +{\rm Tail}\big[u; Q_{\widetilde{R}}\big],
\end{equation}
and a smaller cylinder at $(x_o,t_o)$:
$$
\widetilde{Q}_o:=Q(R,\boldsymbol\om^{2-p}(cR)^{sp})
\subset Q_o 
$$ 
for some $c\in(0,\frac14)$ to be chosen.
Set
\begin{equation*}
	\boldsymbol \mu^+:=\essup_{Q_o}u,
	\qquad
	\boldsymbol\mu^-:=\essinf_{Q_o}u.
\end{equation*}
Without loss of generality, we take $(x_o,t_o)=(0,0)$. 
As a result of \eqref{Eq:intr-cyl}, the following intrinsic oscillation estimate holds true:
\begin{equation}\label{Eq:start-cylinder}
 \essosc_{Q_R(\boldsymbol\om^{2-p})}u\le\boldsymbol\om.
\end{equation}
This is the starting estimate of the induction argument to follow.

Let $\dl \in(0,1)$ be determined in Proposition~\ref{Prop:expansion} with $\al=\tfrac12$. 
For some $c\in(0,\tfrac14)$ to be chosen, define $$\tau:=\dl(\tfrac14 \boldsymbol\om)^{2-p}( c R)^{sp}$$ and consider two alternatives
\begin{equation*}
\left\{
\begin{array}{ll}
	\big|\big\{u\big(\cdot,-\tau\big)-\boldsymbol\mu^->\tfrac14 \boldsymbol\om\big\}
	\cap 
	K_{c R}\big|
	\ge
	\tfrac12 |K_{c R}|,\\[5pt]

	\big|\big\{\boldsymbol\mu^+ - u\big(\cdot,-\tau\big)>\tfrac14 \boldsymbol\om\big\}
	\cap 
	K_{c R}\big|
	\ge
	\tfrac12 |K_{c R}|.
\end{array}\right.
\end{equation*}
Assuming $\boldsymbol\mu^+ - \boldsymbol\mu^-\ge\tfrac12\boldsymbol\om$, one of the two alternatives must hold.
Whereas the case $\boldsymbol\mu^+ - \boldsymbol\mu^-<\tfrac12\boldsymbol\om$ will be trivially incorporated into the forthcoming oscillation estimate \eqref{Eq:osc:0}.

Let us suppose the first alternative holds for instance. An appeal to
Proposition~\ref{Prop:expansion} with $\al=\tfrac12$, $\xi=\tfrac14$ and $\rho=cR$ determines $\eta\in(0,\tfrac12)$ and yields that,
either
\begin{equation*}
	\widetilde{\boldsymbol\gm}{\rm Tail}\big[ \big(u-\boldsymbol \mu^{-}\big)_{-}; \widetilde{Q}_o\big] > \eta  \boldsymbol\om,
\end{equation*}
 or
\begin{equation*}
	u-\boldsymbol \mu^{-} \ge\eta\boldsymbol\om
	\quad\text{a.e. in}\> Q_{cR}(\tfrac12\dl(\tfrac14 \boldsymbol\om)^{2-p}).
\end{equation*}
In any case, taking \eqref{Eq:start-cylinder} into account and denoting $\widehat{\boldsymbol\gm}=\widetilde{\boldsymbol\gm}/\eta$, this gives 
\begin{equation}\label{Eq:osc:0}
\essosc_{Q_{cR}(\frac12\dl(\frac14 \boldsymbol\om)^{2-p})} u\le \max\Big\{(1-\eta)\boldsymbol\om,\, \widehat{\boldsymbol\gm}{\rm Tail}\big[ \big(u-\boldsymbol \mu^{-}\big)_{-}; \widetilde{Q}_o\big]\Big\}=:\boldsymbol\om_1.
\end{equation}

At this moment, it is unclear,  due to the presence of the tail, why $\boldsymbol\om_1$ should be controlled by $\boldsymbol\om$. Therefore, tail estimates have to be performed and refined at different stages.
To this end, we first estimate the tail by
\begin{equation}\label{Eq:tail:0}
\begin{aligned}
{\rm Tail}\big[ \big(u-\boldsymbol \mu^{-}\big)_{-}; \widetilde{Q}_o\big] &=  \int^0_{-\boldsymbol\om^{2-p}(cR)^{sp}}\int_{\rn\setminus K_{R}}\frac{\big(u-\boldsymbol \mu^{-}\big)_{-}^{p-1}}{|x|^{N+sp}}\,\dx\dt \\
&\le \boldsymbol\gm c^{sp} \boldsymbol\om +\boldsymbol\gm \int^0_{-\boldsymbol\om^{2-p}(cR)^{sp}}\int_{\rn\setminus K_{R}}\frac{u_{-}^{p-1}}{|x|^{N+sp}}\,\dx\dt\\
&= \boldsymbol\gm c^{sp} \boldsymbol\om +\boldsymbol\gm \int^0_{-\boldsymbol\om^{2-p}(cR)^{sp}}\int_{K_{\widetilde{R}}\setminus K_{R}}\frac{u_{-}^{p-1}}{|x|^{N+sp}}\,\dx\dt \\
&\quad+\boldsymbol\gm \int^0_{-\boldsymbol\om^{2-p}(cR)^{sp}}\int_{\rn\setminus K_{\widetilde{R}}}\frac{u_{-}^{p-1}}{|x|^{N+sp}}\,\dx\dt\\
&\le\boldsymbol\gm c^{sp}\boldsymbol\om+ \boldsymbol\gm \int^0_{-\boldsymbol\om^{2-p}(cR)^{sp}}\int_{\rn\setminus K_{\widetilde{R}}}\frac{u_{-}^{p-1}}{|x|^{N+sp}}\,\dx\dt.
\end{aligned}
\end{equation}
By the intrinsic relation \eqref{Eq:intr-cyl}, the above integral can be bounded by $\boldsymbol\om$, and hence, recalling also the definition of $\boldsymbol\om_1$ in \eqref{Eq:osc:0} we obtain
\begin{equation}\label{Eq:tail:1}
{\rm Tail}\big[ \big(u-\boldsymbol \mu^{-}\big)_{-}; \widetilde{Q}_o\big] \le \boldsymbol\gm \boldsymbol\om,\quad\boldsymbol\om_1\le\overline{\boldsymbol\gm}\boldsymbol\om,
\end{equation}
where $\overline{\boldsymbol\gm}$ and $\boldsymbol\gm$ depend only on the data.

Next, we introduce $R_1=\lm R$ for some $\lm\le c$ to verify the set inclusion
\begin{equation}\label{Eq:lm:0}
Q_{R_1}(\boldsymbol\om_1^{2-p}) \subset Q_{cR}(\tfrac12\dl(\tfrac14 \boldsymbol\om)^{2-p}),\quad\text{i.e.}\quad \lm\le 2^{\frac{2p-5}{p}}\overline{\boldsymbol\gm}^{\frac{p-2}{p}}\dl^{\frac1p}c.
\end{equation}
Here, we employed \eqref{Eq:tail:1}$_2$ to estimate.
As a result of this inclusion and \eqref{Eq:osc:0} we obtain
\begin{equation*}
Q_{R_1}(\boldsymbol\om_1^{2-p})\subset Q_o\quad\text{ and }\quad
 \essosc_{Q_{R_1}(\boldsymbol\om_1^{2-p})}u\le\boldsymbol\om_1,
\end{equation*}
which plays the role of \eqref{Eq:start-cylinder} in the next stage.
At this stage, $c\in(0,\tfrac14)$ is still to be chosen. We also remark that if the second alternative holds instead, one only needs to replace $(u-\boldsymbol \mu^{-})_{-}$ by $(u-\boldsymbol \mu^{+})_{+}$ in \eqref{Eq:osc:0}, and then perform similar calculations to reach the same conclusion.

\subsection{The Induction}
Now we may proceed by induction. 
 Suppose up to $i=1,\cdots, j$, we have built
\begin{equation*}
\left\{
	\begin{array}{c}
	\dsty R_o=R,\quad R_i=\lm R_{i-1},\quad \tfrac12 \boldsymbol\om_{i-1}\le\boldsymbol\om_i\le \overline{\boldsymbol\gm}\boldsymbol\om_{i-1},\\ 
	\dsty\boldsymbol\om_o=\boldsymbol\om,\quad\boldsymbol\om_i=\max\Big\{(1-\eta)\boldsymbol\om_{i-1},\,  \widehat{\boldsymbol\gm} {\rm Tail}\big[ \big(u-\boldsymbol \mu_{i-1}^{\pm}\big)_{\pm}; \widetilde{Q}_{i-1}\big]\Big\},\\[5pt]
	\quad 
	Q_i=Q_{R_i}(\boldsymbol\om_i^{2-p}),\quad \widetilde{Q}_{i}=K_{R_i}\times\big(-\boldsymbol\om_i^{2-p} (cR_i)^{sp},0\big],\quad Q_i\subset Q_{i-1}, 
	\\[5pt]
	\dsty\boldsymbol\mu_i^+=\essup_{Q_i}u,
	\quad
	\boldsymbol\mu_i^-=\essinf_{Q_i}u,
	\quad
	\essosc_{Q_i}u\le\boldsymbol\om_i.
	\end{array}
\right.
\end{equation*}
Note that the relation $\tfrac12 \boldsymbol\om_{i-1}\le\boldsymbol\om_i$ results from the definition of $\boldsymbol\om_i$ and $\eta<\tfrac12$.
The induction argument will show that the above oscillation estimate continues to hold for the $(j+1)$-th step.

Let  $\dl$ be fixed as before, whereas $c\in(0,1)$ is subject to a further choice. To reduce the oscillation in the next stage, we basically repeat what has been done in the first step, now with $\boldsymbol\mu^{\pm}_j$, $\boldsymbol\om_j$, $R_j$, $Q_j$, etc. 
In fact, we define $$\tau:=\dl(\tfrac14 \boldsymbol\om_j)^{2-p}( c R_j)^{sp}$$ and consider two alternatives
\begin{equation}\label{Eq:meas-alter}
\left\{
\begin{array}{ll}
	\big|\big\{u\big(\cdot,-\tau\big)-\boldsymbol\mu^-_j>\tfrac14 \boldsymbol\om_j\big\}
	\cap 
	K_{c R_j}\big|
	\ge
	\tfrac12 |K_{c R_j}|,\\[5pt]

	\big|\big\{\boldsymbol\mu^+_j - u\big(\cdot,-\tau\big)>\tfrac14 \boldsymbol\om_j\big\}
	\cap 
	K_{c R_j}\big|
	\ge
	\tfrac12 |K_{c R_j}|.
\end{array}\right.
\end{equation}
Like in the first step, we may assume $\boldsymbol\mu^+_j - \boldsymbol\mu^-_j\ge\tfrac12\boldsymbol\om_j$, so that one of the two alternatives must hold.
Otherwise, the case $\boldsymbol\mu^+_j - \boldsymbol\mu^-_j<\tfrac12\boldsymbol\om_j$ can be trivially incorporated into the forthcoming oscillation estimate \eqref{Eq:osc:j}.

Let us suppose the first alternative holds for instance. An application of
Proposition~\ref{Prop:expansion} in $Q_j$, with $\al=\tfrac12$, $\xi=\tfrac14$ and $\rho=cR_j$ yields (for the same $\eta$ as before) that,
either
\begin{equation*}
	\widetilde{\boldsymbol\gm} {\rm Tail}\big[ \big(u-\boldsymbol \mu_j^{-}\big)_{-}; \widetilde{Q}_j\big] > \eta  \boldsymbol\om_j,
\end{equation*}
 or
\begin{equation*}
	u-\boldsymbol \mu_j^{-} \ge\eta\boldsymbol\om_j
	\quad\text{a.e. in}\> Q_{cR_j}(\tfrac12\dl(\tfrac14 \boldsymbol\om_j)^{2-p}),
\end{equation*}
which, thanks to the $j$-th induction assumption, gives 
\begin{equation}\label{Eq:osc:j}
\essosc_{Q_{cR_j}(\frac12\dl(\frac14 \boldsymbol\om_j)^{2-p})} u\le \max\Big\{(1-\eta)\boldsymbol\om_j,\, \widehat{\boldsymbol\gm}{\rm Tail}\big[ \big(u-\boldsymbol \mu_j^{-}\big)_{-}; \widetilde{Q}_j\big]\Big\}=:\boldsymbol\om_{j+1}.
\end{equation}
Here, we have taken the same $\widehat{\boldsymbol\gm}$ as in \eqref{Eq:osc:0}.

In order for the induction, it suffices to first obtain $\boldsymbol\om_{j+1}\le \overline{\boldsymbol\gm}\boldsymbol\om_{j}$ for some $\overline{\boldsymbol\gm}>1$. It has the same nature as \eqref{Eq:tail:1}, as long as  $\overline{\boldsymbol\gm}$ can be computed in terms of the data only and independent of $j$. This hinges upon the control of the tail.
Indeed, 
 we rewrite the tail as follows:
 \begin{equation}\label{Eq:tail-rewrite}
\begin{aligned}
{\rm Tail}\big[ \big(u-\boldsymbol \mu_j^{-}\big)_{-}; \widetilde{Q}_j\big]&= \int^0_{-\boldsymbol\om_j^{2-p}(cR_j)^{sp}}\int_{\rn\setminus K_j} \frac{\big(u-\boldsymbol \mu_j^{-}\big)_{-}^{p-1}}{|x|^{N+sp}}\,\dx \dt\\
&=    \int^0_{-\boldsymbol\om_j^{2-p}(cR_j)^{sp}} \int_{\rn\setminus K_R}\frac{\big(u-\boldsymbol \mu_j^{-}\big)_{-}^{p-1}}{|x|^{N+sp}}\,\dx\dt\\
&\quad+ \sum_{i=1}^{j} \int^0_{-\boldsymbol\om_j^{2-p}(cR_j)^{sp}} \int_{K_{i-1}\setminus K_{i}} \frac{\big(u-\boldsymbol \mu_j^{-}\big)_{-}^{p-1}}{|x|^{N+sp}}\,\dx\dt.
\end{aligned}
\end{equation}
Here, we denoted $K_i=K_{R_i}$ for short.
To estimate the first integral on the right of \eqref{Eq:tail-rewrite}, observe that since $|\boldsymbol \mu_j^{-}|\le \boldsymbol \om$ and $u_-\le \boldsymbol \om$ on $Q_{\widetilde{R}}$, we have for any $t\in (-\boldsymbol\om_j^{2-p}R_j^{sp},0)$ that,
\begin{align*}
\int_{\rn\setminus K_R} \frac{\big(u-\boldsymbol \mu_j^{-}\big)_{-}^{p-1}}{|x|^{N+sp}}\,\dx
&\le 
\boldsymbol \gm\int_{\rn\setminus K_R} \frac{|\boldsymbol \mu_j^{-}|^{p-1}+u_{-}^{p-1}}{|x|^{N+sp}}\,\dx\\
&\le\boldsymbol \gm\frac{\boldsymbol \om^{p-1}}{R^{sp}}+\boldsymbol \gm\int_{K_{\widetilde{R}}\setminus K_R} \frac{u_{-}^{p-1}}{|x|^{N+sp}}\,\dx+\boldsymbol \gm\int_{\rn\setminus K_{\widetilde{R}}} \frac{u_{-}^{p-1}}{|x|^{N+sp}}\,\dx\\
&\le\boldsymbol \gm\frac{\boldsymbol \om^{p-1}}{R^{sp}}+\boldsymbol \gm\int_{\rn\setminus K_{\widetilde{R}}} \frac{u_{-}^{p-1}}{|x|^{N+sp}}\,\dx;
\end{align*}
 by integrating the last display in time and using the definition of $\boldsymbol \om$, $\boldsymbol \om_j$ and $R_j$, we continue to estimate
\begin{align*}
& \int^0_{-\boldsymbol\om_j^{2-p}(cR_j)^{sp}}  \int_{\rn\setminus K_R} \frac{\big(u-\boldsymbol \mu_j^{-}\big)_{-}^{p-1}}{|x|^{N+sp}}\,\dx\dt\\
&\qquad\le\boldsymbol \gm  \int^0_{-\boldsymbol\om_j^{2-p}(cR_j)^{sp}}  \bigg(\frac{\boldsymbol \om^{p-1}}{R^{sp}}+ \int_{\rn\setminus K_{\widetilde{R}}} \frac{u_{-}^{p-1}}{|x|^{N+sp}}\,\dx\bigg)\dt\\
&\qquad\le\boldsymbol \gm\boldsymbol\om_j^{2-p}(cR_j)^{sp} \frac{\boldsymbol \om^{p-1}}{R^{sp}} + \boldsymbol \gm  \int^0_{-\boldsymbol\om_j^{2-p}(cR_j)^{sp}}    \int_{\rn\setminus K_{\widetilde{R}}} \frac{u_{-}^{p-1}}{|x|^{N+sp}}\,\dx \dt.
\end{align*}
Whereas the second integral on the right of \eqref{Eq:tail-rewrite} is estimated by using the simple fact that, for $i=1,2,\cdots,j$,
$$\big(u-\boldsymbol \mu_j^{-}\big)_{-}\le \boldsymbol \mu_j^{-} - \boldsymbol \mu_{i-1}^{-}\le\boldsymbol \mu_j^{+} - \boldsymbol \mu_{i-1}^{-}\le\boldsymbol \mu_{i-1}^{+} - \boldsymbol \mu_{i-1}^{-} \le \boldsymbol \om_{i-1}\quad\text{a.e. in}\>Q_{i-1}.$$
 Namely, for any $t\in (-\boldsymbol\om_j^{2-p}(cR_j)^{sp},0)$,
\begin{align*}
\int_{K_{i-1}\setminus K_{i}} \frac{\big(u-\boldsymbol \mu_j^{-}\big)_{-}^{p-1}}{|x|^{N+sp}}\,\dx
\le \boldsymbol \gm \frac{\boldsymbol \om_{i-1}^{p-1}}{R_i^{sp}};
\end{align*} 
consequently, we have
\begin{align*}
& \int^0_{-\boldsymbol\om_j^{2-p}(cR_j)^{sp}}  \int_{K_{i-1}\setminus K_{i}} \frac{\big(u-\boldsymbol \mu_j^{-}\big)_{-}^{p-1}}{|x|^{N+sp}}\,\dx\dt
\le\boldsymbol \gm \boldsymbol\om_j^{2-p}(cR_j)^{sp}\frac{\boldsymbol \om_{i-1}^{p-1}}{R_i^{sp}}.
\end{align*}
Combining the above estimates in \eqref{Eq:tail-rewrite} we arrive at 
\begin{align}\nonumber
{\rm Tail}\big[ \big(u-\boldsymbol \mu_j^{-}\big)_{-}; \widetilde{Q}_j\big] &\le \boldsymbol \gm \boldsymbol\om_j^{2-p}(cR_j)^{sp} \frac{\boldsymbol \om^{p-1}}{R^{sp}} +\boldsymbol \gm\sum_{i=1}^{j}  \boldsymbol\om_j^{2-p}(cR_j)^{sp}\frac{\boldsymbol \om_{i-1}^{p-1}}{R_i^{sp}}\\ \label{Eq:tail-rewrite:1}
&\quad+ \boldsymbol \gm  \int^0_{-\boldsymbol\om_j^{2-p}(cR_j)^{sp}}    \int_{\rn\setminus K_{\widetilde{R}}} \frac{u_{-}^{p-1}}{|x|^{N+sp}}\,\dx \dt.
\end{align}

Next, we continue to estimate the first two terms on the right-hand side of \eqref{Eq:tail-rewrite:1} by observing the simple fact that (see the definitions of  $\boldsymbol \om_j$ and $R_j$)
\begin{equation}\label{Eq:om-omj}
2^{i-j}\boldsymbol \om_i 
\le\boldsymbol \om_j,\quad R_j=\lm^{j-i}R_i,\quad \forall\,i\in\{0,1,\cdots, j\}.
\end{equation}
Using \eqref{Eq:om-omj} the first term on the right-hand side of \eqref{Eq:tail-rewrite:1} is bounded by (noting $\lm\le c$)
$$
\boldsymbol \gm \boldsymbol\om_j^{2-p}(cR_j)^{sp} \frac{\boldsymbol \om^{p-1}}{R^{sp}}\le \boldsymbol \gm \boldsymbol\om_j ( 2^{p-1}c^{sp}  )^j.
$$ 
By the same token, the second term on the right-hand side of \eqref{Eq:tail-rewrite:1} is bounded by 
\begin{align*}
\boldsymbol \gm\sum_{i=1}^{j}  \boldsymbol\om_j^{2-p}(cR_j)^{sp}\frac{\boldsymbol \om_{i-1}^{p-1}}{R_i^{sp}}
\le \boldsymbol \gm \boldsymbol\om_j\sum_{i=1}^{j}(2^{p-1} c^{sp} )^{j-i+1}.
\end{align*}
Substituting these estimates back to \eqref{Eq:tail-rewrite:1} we obtain
\begin{align}\nonumber
{\rm Tail}\big[ \big(u-\boldsymbol \mu_j^{-}\big)_{-}; \widetilde{Q}_j\big] &\le \boldsymbol \gm \boldsymbol\om_j\sum_{i=1}^{j}( 2^{p-1}c^{sp}  )^{j-i+1} \\ \label{Eq:tail-rewrite:2}
&\quad + \boldsymbol \gm  \int^0_{-\boldsymbol\om_j^{2-p}(cR_j)^{sp}}    \int_{\rn\setminus K_{\widetilde{R}}} \frac{u_{-}^{p-1}}{|x|^{N+sp}}\,\dx \dt.
\end{align}
Whereas the integral term in \eqref{Eq:tail-rewrite:2} can be bounded by 
\begin{align}\label{Eq:tail-int}
\boldsymbol \gm  &\int^0_{-\boldsymbol\om_j^{2-p}(cR_j)^{sp}}    \int_{\rn\setminus K_{\widetilde{R}}} \frac{u_{-}^{p-1}}{|x|^{N+sp}}\,\dx \dt\\\nonumber
&\quad\le \boldsymbol \gm  \int^0_{-\boldsymbol\om_j^{2-p}(cR_j)^{sp}}    \int_{\rn\setminus K_{\widetilde{R}}} \frac{|\boldsymbol \mu_{j-1}^{-}|^{p-1}+\big(u-\boldsymbol \mu_{j-1}^{-}\big)_{-}^{p-1}}{|x|^{N+sp}}\,\dx \dt\\ \nonumber
&\quad\le \boldsymbol \gm\boldsymbol\om_j^{2-p}(cR_j)^{sp}\frac{\boldsymbol\om^{p-1}}{R^{sp}}+\boldsymbol \gm  \int^0_{-\boldsymbol\om_j^{2-p}(cR_j)^{sp}}    \int_{\rn\setminus K_{\widetilde{R}}} \frac{\big(u-\boldsymbol \mu_{j-1}^{-}\big)_{-}^{p-1}}{|x|^{N+sp}}\,\dx \dt\\ \nonumber
&\quad\le \boldsymbol \gm\boldsymbol\om_j  ( 2^{p-1}c^{sp}  )^{j}+ \boldsymbol \gm{\rm Tail}\big[ \big(u-\boldsymbol \mu_{j-1}^{-}\big)_{-}; \widetilde{Q}_{j-1}\big]\\ \nonumber
&\quad\le \boldsymbol \gm\boldsymbol\om_j  ( 2^{p-1}c^{sp}  )^{j} + \frac{\boldsymbol \gm}{\widehat{\boldsymbol \gm}}\boldsymbol\om_j . 
\end{align}
To obtain the second-to-last line, we used \eqref{Eq:om-omj} and also the definition of the tail. Whereas the last line follows from the definition of $\boldsymbol\om_j$.
Substituting this estimate back to \eqref{Eq:tail-rewrite:2} we obtain
\[
{\rm Tail}\big[ \big(u-\boldsymbol \mu_j^{-}\big)_{-}; \widetilde{Q}_j\big] \le \boldsymbol \gm \boldsymbol\om_j\sum_{i=1}^{j}( 2^{p-1}c^{sp}  )^{j-i+1} + \frac{\boldsymbol \gm}{\widehat{\boldsymbol \gm}}\boldsymbol\om_j .
\]
The summation on the right is bounded by $1$ if
\begin{equation}\label{Eq:choice-c-1}
2^{p-1}c^{sp} <2^{-1}\quad\text{i.e.}\quad c<2^{-\frac1s}.
\end{equation}
Therefore, we have
\begin{equation*}
{\rm Tail}\big[ \big(u-\boldsymbol \mu_j^{-}\big)_{-}; \widetilde{Q}_j\big] \le\boldsymbol \gm\boldsymbol \om_j,\quad  \boldsymbol \om_{j+1}\le \overline{\boldsymbol\gm}\boldsymbol \om_j.
\end{equation*}
Here, $\overline{\boldsymbol\gm}$ can be different from the one in \eqref{Eq:tail:1}, but we always take the greater one.

Let $R_{j+1}=\lm R_j$ for some $\lm\in(0,1)$ to verify the set inclusion
\begin{equation}\label{Eq:lm:j}
Q_{R_{j+1}}(\boldsymbol\om_{j+1}^{2-p}) \subset Q_{cR_j}(\tfrac12\dl(\tfrac14 \boldsymbol\om_j)^{2-p}),\quad\text{i.e.}\quad \lm\le 2^{\frac{2p-5}{p}}\overline{\boldsymbol\gm}^{\frac{p-2}{p}}\dl^{\frac1p}c.
\end{equation}
As a result of the inclusion \eqref{Eq:lm:j} and \eqref{Eq:osc:j} we obtain
\begin{equation*}
Q_{R_{j+1}}(\boldsymbol\om_{j+1}^{2-p})\subset Q_{R_{j}}(\boldsymbol\om_{j}^{2-p})\quad\text{ and }\quad
 \essosc_{Q_{R_{j+1}}(\boldsymbol\om_{j+1}^{2-p})}u\le\boldsymbol\om_{j+1},
\end{equation*}
which completes the induction argument, in the case of the first alternative in \eqref{Eq:meas-alter}. If the second alternative in \eqref{Eq:meas-alter} holds, one needs to work with $(u-\boldsymbol \mu_j^{+})_{+}$ instead of $(u-\boldsymbol \mu_j^{-})_{-}$ starting from \eqref{Eq:osc:j}. However, the tail estimation can be performed along the same lines, and the same conclusion will be reached. Note also that a further requirement on the smallness of $c$ is imposed in \eqref{Eq:choice-c-1}. Yet, this is not the final choice of $c$, and it is subject to another requirement next.

\subsection{Modulus of Continuity}\label{S:modulus}
By construction of last sections, we have obtained $R_n=\lm^n R $, 
$Q_n=Q_{R_n}(\boldsymbol\om_n^{2-p})$ for $n\in\nn$, such that $Q_n\subset Q_{n-1}$,
and the oscillation estimate
\begin{equation}\label{Eq:osc-est-1}
\essosc_{Q_n}u\le \boldsymbol\om_n=\max\Big\{(1-\eta)\boldsymbol\om_{n-1},\, \widehat{\boldsymbol\gm} {\rm Tail}\big[ \big(u-\boldsymbol \mu_{n-1}^{\pm}\big)_{\pm}; \widetilde{Q}_{n-1}\big]\Big\}.
\end{equation}
The goal of this section is to derive an explicit modulus of continuity encoded in this oscillation estimate. 

Let us take on this task by employing \eqref{Eq:tail-rewrite:2} (with $j=n-1$) to estimate the tail on the right-hand side of \eqref{Eq:osc-est-1}.   Indeed, we may further require the smallness of $c$, such that
\[
\widehat{\boldsymbol \gm}\boldsymbol \gm \boldsymbol\om_{n-1}\sum_{i=1}^{n-1}( 2^{p-1}c^{sp}  )^{n-i}\le\frac12 \boldsymbol\om_{n-1},
\]
which is implied if one imposes that
\begin{equation}\label{Eq:choice-c-2}
c\le (2^{p-1}\widehat{\boldsymbol \gm}\boldsymbol \gm)^{-\frac1{sp}}.
\end{equation}
Consequently, combining this estimate with the definition of $\boldsymbol\om_{n}$ in \eqref{Eq:osc-est-1},  we obtain that
\begin{equation*}
\begin{aligned}
\boldsymbol\om_{n} &\le(1-\eta)\boldsymbol\om_{n-1} +   \boldsymbol \gm  \int^0_{-\boldsymbol\om_{n-1}^{2-p}(cR_{n-1})^{sp}}    \int_{\rn\setminus K_{\widetilde{R}}} \frac{|u|^{p-1}}{|x|^{N+sp}}\,\dx \dt\\
&\le(1-\eta)\boldsymbol\om_{n-1} +   \boldsymbol \gm  \int^0_{-\boldsymbol\om^{2-p}(cR)^{sp}}    \int_{\rn\setminus K_{\widetilde{R}}} \frac{|u|^{p-1}}{|x|^{N+sp}}\,\dx \dt.
\end{aligned}
\end{equation*}
Iterating the above estimate  and joining it with the oscillation estimate \eqref{Eq:osc-est-1} yield that
\begin{equation}\label{Eq:osc-est-2}
\essosc_{Q_n}u \le \boldsymbol\om_{n}\le (1-\eta)^n \boldsymbol\om + \boldsymbol \gm  \int^0_{-\boldsymbol\om^{2-p}R^{sp}}    \int_{\rn\setminus K_{\widetilde{R}}} \frac{|u|^{p-1}}{|x|^{N+sp}}\,\dx \dt.
\end{equation}
From the above display and \eqref{Eq:intr-cyl}, we also readily obtain $\boldsymbol\om_{n}\le \boldsymbol\gm_*\boldsymbol\om$ for $\boldsymbol\gm_*=1+\boldsymbol\gm$.

Next, we observe that the sequence $\{\boldsymbol\om_n^{2-p}R_n^{sp}\}_{n\in\nn}$ decreases to $0$. 
Fix some $r\in(0,R)$.   There must be some $n\in\nn$, such that
\[
\boldsymbol\om_{n+1}^{2-p}R_{n+1}^{sp}\le \boldsymbol\om^{2-p}r^{sp}<\boldsymbol\om_n^{2-p}R_n^{sp}.
\]
The right-hand side inequality together with $\boldsymbol\om_{n}\le \boldsymbol\gm_*\boldsymbol\om$ implies that 
$$
\sig r<R_n
\quad\text{and}\quad 
Q_{\sig r}(\boldsymbol\om^{2-p})\subset Q_n
\quad\text{for}\>\sig=\boldsymbol\gm_*^{(p-2)/sp}.
$$ 
 Whereas the left-hand side inequality implies, iterating $\tfrac12\boldsymbol\om_n\le  \boldsymbol\om_{n+1}$, that  
\[
 \boldsymbol\om^{2-p}r^{sp} \ge \boldsymbol\om_{n+1}^{2-p}R_{n+1}^{sp}\ge (2^{p-2}\lm^{sp})^{n+1}\boldsymbol\om^{2-p} R^{sp},
\]
from which we obtain that
\[
\Big(\frac{r}{R}\Big)^{sp}\ge(2^{p-2}\lm^{sp})^{n+1}\quad\implies\quad (1-\eta)^{n+1}\le\Big(\frac{r}{R}\Big)^{\be}
\]
where 
\[
\be=\frac{sp\ln(1-\eta)}{\ln(2^{p-2}\lm^{sp})}.
\]
Note that the choice of $\lm$ is made out of the smaller one among \eqref{Eq:lm:0} and \eqref{Eq:lm:j}, once $c$ is chosen to be the smaller one of \eqref{Eq:choice-c-1} and \eqref{Eq:choice-c-2}.

Finally, collecting all these estimate in \eqref{Eq:osc-est-2}, we have for all $r\in(0,R)$,
\begin{equation}\label{Eq:osc-final}
\essosc_{Q_{\sig r}(\boldsymbol\om^{2-p})}u \le 2 \boldsymbol\om \Big(\frac{r}{R}\Big)^{\be} + \boldsymbol \gm  \int^0_{-\boldsymbol\om^{2-p}R^{sp}}    \int_{\rn\setminus K_{\widetilde{R}}} \frac{|u|^{p-1}}{|x|^{N+sp}}\,\dx \dt.
\end{equation}
Without loss of generality we may assume that the oscillation estimate \eqref{Eq:osc-final} holds with $R$ replaced by some $\overline{R}\in(r,R)$. Then, we take $\overline{R}=(rR)^{\frac12}$ to obtain
\begin{equation*}
\essosc_{Q_{\sig r}(\boldsymbol\om^{2-p})}u \le 2 \boldsymbol\om \Big(\frac{r}{R}\Big)^{\frac{\be}2}  + \boldsymbol \gm  \int^0_{-\boldsymbol\om^{2-p}(rR)^{sp/2}}    \int_{\rn\setminus K_{\widetilde{R}}} \frac{|u|^{p-1}}{|x|^{N+sp}}\,\dx \dt.
\end{equation*}
The proof is concluded by redefining $\be/2$ to be $\be$.

\section{Proof of Theorem~\ref{Thm:1:1}: $p> 2$}
Consider the cylinders
$$
Q_o:=Q_R(L\theta)\subset 
Q_{\widetilde{R}}
\subset E_T
$$ 
with positive parameters $L$, $\widetilde{R}>R$ and $\theta=(\tfrac14\boldsymbol\om)^{2-p}$ satisfying 
\begin{equation}\label{Eq:intr-cyl:1}
\boldsymbol\om\ge 2\essup_{Q_{\widetilde{R}}} |u| +{\rm Tail}\big[u; Q_{\widetilde{R}}\big],
\end{equation}
and a smaller cylinder
$$
\widetilde{Q}_o:=Q(R,L\theta (cR)^{sp})
\subset Q_o 
$$ 
for some $c\in(0,\frac14)$.
The numbers $c$ and $L$ will be chosen in terms of the data $\{s, p, N, C_o, C_1\}$.
Set 
\begin{equation*}
	\boldsymbol \mu^+:=\essup_{Q_R(L \theta)}u,
	\qquad
	\boldsymbol\mu^-:=\essinf_{Q_R(L \theta)}u.
\end{equation*}
Without loss of generality, we take $(x_o,t_o)=(0,0)$.
Because of \eqref{Eq:intr-cyl:1} the following intrinsic relation holds:
\begin{equation}\label{Eq:start-cylinder:1}
\essosc_{Q_R(L \theta)}u\le\boldsymbol\om.  
\end{equation}
Obviously, it also yields that
\[
\essosc_{Q_R(\boldsymbol\om^{2-p})}u\le\boldsymbol\om. 
\]

The main argument parallels Sections~5.1 -- 5.2 of \cite{Liao-cvpd-24}, unfolding along two alternatives. The underlying intrinsic scaling argument traces back to DiBenedetto's work on the parabolic $p$-Laplacian, cf.~\cite[Chapter~III]{DB}.
\subsection{The First Alternative}\label{S:case-1}
In this section, 
we work with $u$ as a super-solution near its
infimum.
Without loss of generality, we may assume 
\begin{equation}\label{Eq:mu-pm-}
	\boldsymbol\mu^+ -\boldsymbol\mu^- >\tfrac12\boldsymbol\om,
\end{equation}
as the other case $\boldsymbol\mu^+ -\boldsymbol\mu^- \le\frac12\boldsymbol\om$ trivially gives a reduction of oscillation.

Suppose 
for some $\bar{t}\in(-(L-1) \theta (cR)^{sp},0]$, there holds
\begin{equation}\label{Eq:1st-alt-meas}
	\big|\big\{u\le\boldsymbol\mu^-+\tfrac14 \boldsymbol\om\big\}
	\cap 
	(0,\bar{t})+Q_{cR}(\theta)\big|\le \nu|Q_{cR}(\theta)|,
\end{equation}
where $\nu$ is the constant determined in Lemma~\ref{Lm:DG:1} (with $\dl=1$) in terms of the data.
According to Lemma~\ref{Lm:DG:1}  with   $\dl=1$, $\xi=\frac14$ and $\rho=cR$, we have either
\begin{equation}\label{Eq:tail2:0}
	\widetilde{\boldsymbol\gm} {\rm Tail}\big[ \big(u-\boldsymbol \mu^{-}\big)_{-}; \widetilde{Q}_o\big] > \tfrac14   \boldsymbol\om,
\end{equation}
or
\begin{equation}\label{Eq:lower-bd}
	u\ge\boldsymbol \mu^-+\tfrac{1}8\boldsymbol\om
	\quad
	\mbox{a.e.~in $(0,\bar{t})+Q_{\frac12 cR}(\theta)$.}
\end{equation}


Next, we use the pointwise estimate in \eqref{Eq:lower-bd} at $t_*=\bar{t}- \theta(\tfrac12 cR)^{sp}$ and apply Lemma~\ref{Lm:DG:initial:1} with $\rho=\tfrac12cR$ to obtain that, for some free parameter $\xi_o\in(0,\tfrac18)$, either
\begin{equation}\label{Eq:tail2:1}
	\widetilde{\boldsymbol\gm} {\rm Tail}\big[ \big(u-\boldsymbol \mu^{-}\big)_{-}; \widetilde{Q}_o \big] > \xi_o   \boldsymbol\om,
\end{equation}
or
\begin{equation}\label{Eq:lower-bd:1}
 u\ge\boldsymbol \mu^-+\tfrac14\xi_o \boldsymbol\om
	\quad
	\mbox{a.e.~in $K_{\frac14cR}\times\big(t_*, t_*+\nu_o(\xi_o\boldsymbol\om)^{2-p}(\tfrac12cR)^{sp}\big]$.}
\end{equation}
We choose the number $\xi_o$ to fulfill
\begin{equation*}
\nu_o(\xi_o\boldsymbol\om)^{2-p}(\tfrac12cR)^{sp}\ge L  (\tfrac14\boldsymbol\om)^{2-p}(cR)^{sp},\quad\text{i.e.}\quad \xi_o=  \tfrac14\Big(\frac{\nu_o }{2^p L}\Big)^{\frac1{p-2}}.
\end{equation*}
Consequently, the estimate \eqref{Eq:lower-bd:1} holds up to $t=0$ and yields that 
\begin{equation}\label{Eq:osc1:0}
\essosc_{Q_{\frac14cR}(\theta)} u\le \big(1-\tfrac14\xi_o \big)\boldsymbol\om.
\end{equation}
Such an oscillation estimate holds if \eqref{Eq:tail2:0} and \eqref{Eq:tail2:1} do not occur. When one of them occurs, we will incorporate it into the forthcoming oscillation estimate \eqref{Eq:osc1:2-}. Keep in mind that the constant $L$ is yet to be determined in terms of the data $\{s, p, N, C_o, C_1\}$. 

%
%

\subsection{The Second Alternative}
In this section, 
we work with $u$ as a sub-solution near its
supremum.
Suppose \eqref{Eq:1st-alt-meas} does not hold for any $\bar{t}\in\big(-(L-1) \theta (cR)^{sp},0\big]$. Because of \eqref{Eq:mu-pm-}, we rephrase it as
\begin{equation*}
	\big|\big\{\boldsymbol\mu^+-u\ge\tfrac14 \boldsymbol\om\big\}
	\cap 
	(0,\bar{t})+Q_{cR}(\theta)\big|> \nu|Q_{cR}(\theta)|.
\end{equation*}
Based on this, we can find some $t_*\in\big[\bar{t}-\theta(cR)^{sp}, \bar{t}-\tfrac12\nu\theta(cR)^{sp}\big]$ satisfying
\begin{equation*}
	\big|\big\{\boldsymbol\mu^+-u(\cdot, t_*)\ge\tfrac14 \boldsymbol\om\big\}
	\cap 
	 K_{cR} \big|> \tfrac12\nu|K_{cR}|.
\end{equation*}
Indeed, if the above inequality were not to hold for any $t_*$ in the given 
interval, then 
\begin{align*}
	\big|\big\{\boldsymbol\mu^+-u\ge\tfrac14\boldsymbol\om\big\}\cap 
	(0,\bar{t})+Q_{cR}(\theta)\big|
	&=
	\int_{\bar{t}-\theta(cR)^p}^{\bar{t} -\frac12\nu\theta(cR)^{sp}}
	\big|\big\{\boldsymbol\mu^+-u(\cdot, s)\ge \tfrac{1}4\boldsymbol\om\big\}
	\cap K_{cR}\big|\,\ds\\
	&\phantom{=\,}
	+\int^{\bar{t}}_{\bar{t}-\frac12\nu\theta(cR)^{sp}}
	\big|\big\{\boldsymbol\mu^+-u(\cdot, s)\ge \tfrac{1}4\boldsymbol\om\big\}
	\cap K_{cR}\big|\,\ds\\
	&<\tfrac12\nu |K_{cR}|\theta(cR)^{sp}\big(1-\tfrac12\nu\big) +\tfrac12\nu\theta(cR)^{sp}
	|K_{cR} |\\
	&<\nu | Q_{cR}(\theta)|,
\end{align*}
which would yield a contradiction.

Starting from this measure theoretical information, we may apply Lemma~\ref{Lm:3:1} (with $\al=\tfrac12\nu$ and $\rho=cR$) to obtain $\dl$ and $\varep$ depending on the data and $\nu$, such that, for some free parameter $\xi_1\in(0,\frac14)$,
either 
\begin{equation}\label{Eq:tail3:0}
\widetilde{\boldsymbol\gm} {\rm Tail}\big[ \big(u-\boldsymbol \mu^{+}\big)_{+}; \widetilde{Q}_o\big] >\xi_1\boldsymbol \om,
\end{equation} 
 or
\begin{equation}\label{Eq:2nd-alt-meas:1}
	\big|\big\{
	 \boldsymbol \mu^{+}-u(\cdot, t) \ge \varep\xi_1\boldsymbol \om\big\} \cap K_{cR} \big|
	\ge\frac{\al}2 |K_{cR}|
	\>\mbox{ for all $t\in\big(t_*,t_*+\dl(\xi_1\boldsymbol \om)^{2-p}(cR)^{sp}\big]$.}
\end{equation}
The number $\xi_1$ is chosen to satisfy
\[
\dl(\xi_1\boldsymbol \om)^{2-p}(cR)^{sp}\ge\theta(cR)^{sp},\quad\text{i.e.}\quad\xi_1=\tfrac14\dl^{\frac1{p-2}}.
\]
This choice guarantees that  \eqref{Eq:2nd-alt-meas:1} holds at the time level $\bar{t}$.
Consequently,   \eqref{Eq:2nd-alt-meas:1} yields
\begin{equation}\label{Eq:2nd-alt-meas:2}
	\big|\big\{
	 \boldsymbol \mu^{+}-u(\cdot, t) \ge \varep\xi_1\boldsymbol \om\big\} \cap K_{cR} \big|
	\ge\frac{\al}2 |K_{cR}|
	\>\mbox{ for all $t\in\big(-(L-1)\theta (cR)^{sp},0\big]$,}
\end{equation}
thanks to the arbitrariness of $\bar{t}$. 

Given \eqref{Eq:2nd-alt-meas:2}, we plan to employ Lemma~\ref{Lm:3:2} with $\dl=1$, $\xi=\varep\xi_1$ and $\rho=cR$ next. 
In order for that, first fix $\nu$ as in Lemma~\ref{Lm:DG:1} (with $\dl=1$) and then select $\sig\in(0,\tfrac12)$ to verify
\[\boldsymbol\gm \frac{\sig^{p-1}}{\al}\le\nu.\]
 This choice is viable because $\boldsymbol\gm$ of Lemma~\ref{Lm:3:2} is independent of $\sig$.
Then,  $L$ is determined by
\begin{equation}\label{Eq:c2:2}
(L-1)\theta (cR)^{sp}\ge(\sig\varep\xi_1\boldsymbol\om)^{2-p}(cR)^{sp},\quad\text{i.e.}\quad L\ge1+(4\sig\varep\xi_1)^{2-p}.
\end{equation}
Since $\{\sig, \varep, \xi_1\}$ have been fixed in terms of the data, now $L$ is also chosen in terms of the data $\{s, p, N, C_o, C_1\}$.
As a result, the measure theoretical information \eqref{Eq:2nd-alt-meas:2} gives that
	\begin{equation*}
	\big|\big\{
		\boldsymbol \mu^{+}-u(\cdot, t)\ge \varep\xi_1\boldsymbol \om
		\big\}\cap K_{cR}\big|
		\ge\al \big|K_{cR}\big|\quad\mbox{ for all $t\in\big( -(\sig\varep\xi_1\boldsymbol \om)^{2-p}(cR)^{sp}, 0\big]$,}
	\end{equation*}
which allows us to apply Lemma~\ref{Lm:3:2}.
Namely, 
either 
 \begin{equation}\label{Eq:tail3:1}
\widetilde{\boldsymbol\gm} {\rm Tail}\big[ \big(u-\boldsymbol \mu^{+}\big)_{+}; \widetilde{Q}_o\big] >\sig\varep\xi_1\boldsymbol\om
 \end{equation} 
 or
\begin{equation*}
	\big|\big\{
	\boldsymbol \mu^{+}-u \le \sig \varep\xi_1\boldsymbol \om \big\}\cap Q_{cR}(\widetilde\theta)\big|
	\le \nu \big|Q_{cR}(\widetilde\theta)|,\quad\text{where}\>\widetilde\theta=(\sig\varep\xi_1\boldsymbol \om)^{2-p}.
\end{equation*}
By Lemma~\ref{Lm:DG:1} (with $\dl=1$), the last display yields
\[
\boldsymbol \mu^{+}-u \ge \tfrac14\sig \varep\xi_1\boldsymbol \om\quad\text{a.e. in}\>Q_{\frac12cR}(\widetilde\theta),
\]
which in turn gives the reduction of oscillation
\begin{equation}\label{Eq:osc1:1}
\essosc_{Q_{\frac12cR}(\widetilde\theta)} u\le \big(1- \tfrac14\sig \varep\xi_1\big)\boldsymbol \om.
\end{equation}


Combining \eqref{Eq:osc1:0} and \eqref{Eq:osc1:1}, we infer that
\begin{equation}\label{Eq:osc1:2}
\essosc_{Q_{\frac14cR}( \theta)} u\le \big(1- \eta\big)\boldsymbol \om,
\end{equation}
where
\[
\eta=\min\big\{\tfrac14\xi_o, \tfrac14\sig \varep\xi_1\big\}.
\]
The oscillation estimate \eqref{Eq:osc1:2} is achieved, assuming that \eqref{Eq:tail2:0}, \eqref{Eq:tail2:1}, \eqref{Eq:tail3:0} and \eqref{Eq:tail3:1} do not occur. Therefore, taking all cases into account, we arrive at
\begin{equation}\label{Eq:osc1:2-}
\essosc_{Q_{\frac14cR}( \theta)} u\le \max\Big\{(1- \eta)\boldsymbol \om,\,\widehat{\boldsymbol\gm} {\rm Tail}\big[ \big(u-\boldsymbol \mu^{\pm}\big)_{\pm}; \widetilde{Q}_o\big]\Big\}=:\boldsymbol \om_1.
\end{equation}
Here, $\widehat{\boldsymbol\gm}$ is determined by the data only.

Next, we aim to show that $\boldsymbol \om_1\le\overline{\boldsymbol \gm}\boldsymbol \om$ for some $\overline{\boldsymbol \gm}>1$ depending only on the data. To this end, we first estimate the tail like in \eqref{Eq:tail:0}, that is, taking the negative truncation for instance,
\begin{align*}
{\rm Tail}\big[ \big(u-\boldsymbol \mu^{-}\big)_{-}; \widetilde{Q}_o\big] &=  \int^0_{-L\theta(cR)^{sp}}\int_{\rn\setminus K_{R}}\frac{\big(u-\boldsymbol \mu^{-}\big)_{-}^{p-1}}{|x|^{N+sp}}\,\dx\dt \\
&\le \boldsymbol\gm c^{sp} \boldsymbol\om +\boldsymbol\gm \int^0_{-L\theta(cR)^{sp}}\int_{\rn\setminus K_{R}}\frac{u_{-}^{p-1}}{|x|^{N+sp}}\,\dx\dt\\
&= \boldsymbol\gm c^{sp} \boldsymbol\om +\boldsymbol\gm \int^0_{-L\theta(cR)^{sp}}\int_{K_{\widetilde{R}}\setminus K_{R}}\frac{u_{-}^{p-1}}{|x|^{N+sp}}\,\dx\dt \\
&\quad+\boldsymbol\gm \int^0_{-L\theta(cR)^{sp}}\int_{\rn\setminus K_{\widetilde{R}}}\frac{u_{-}^{p-1}}{|x|^{N+sp}}\,\dx\dt\\
&\le\boldsymbol\gm c^{sp}\boldsymbol\om+ \boldsymbol\gm \int^0_{-L\theta(cR)^{sp}}\int_{\rn\setminus K_{\widetilde{R}}}\frac{u_{-}^{p-1}}{|x|^{N+sp}}\,\dx\dt.
\end{align*}
Here, the constant $\boldsymbol\gm$ takes $L$ into account.
By \eqref{Eq:intr-cyl:1}, the above integral can be bounded by $\boldsymbol\om$, and hence, recalling also the definition of $\boldsymbol\om_1$ we obtain
\begin{equation*}
{\rm Tail}\big[ \big(u-\boldsymbol \mu^{\pm}\big)_{\pm}; \widetilde{Q}_o\big] \le \boldsymbol\gm\boldsymbol\om,\quad\boldsymbol\om_1\le\overline{\boldsymbol\gm}\boldsymbol\om,
\end{equation*}
where $\overline{\boldsymbol\gm}$ and $\boldsymbol\gm$ depend only on the data. 

Now, set $\theta_1=(\tfrac14\boldsymbol \om_1)^{2-p}$ and $R_1=\lm R$. To prepare the induction, we need to choose $\lm$ to verify the set inclusion
\begin{equation}\label{Eq:lm:0-}
Q_{R_1}(L\theta_1)\subset Q_{\frac14cR}( \theta),\quad\text{i.e.}\quad \lm\le c L^{-\frac1{sp}}2^{\frac{2-p}{sp}-2}. 
\end{equation}
Here, we used a simple fact that $\boldsymbol \om_1\ge\frac12\boldsymbol \om$ to estimate.
As a result of this inclusion and  \eqref{Eq:osc1:2-}, we obtain
\begin{equation*}
Q_{R_1}(L\theta_1)\subset Q_{R}(L\theta)\quad\text{ and }\quad  \essosc_{Q_{R_1}( L\theta_1)} u\le \boldsymbol \om_1,
\end{equation*}
which takes the place of \eqref{Eq:start-cylinder:1} in the next stage. 
On the other hand, the above oscillation estimate easily yields
\[
\essosc_{Q_{R_1}( \boldsymbol \om_1^{2-p})} u\le \boldsymbol \om_1.
\]
Note that the above oscillation estimate also takes into account the reverse case of \eqref{Eq:mu-pm-}. Note also that in the smallness requirement \eqref{Eq:lm:0-} of $\lm$, the constant $L$ has been determined in \eqref{Eq:c2:2} in terms of the data, whereas $c$ is still to be chosen. 

\subsection{The Induction}
Now we may proceed by induction. 
 Suppose up to $i=1,\cdots, j$, we have built
\begin{equation*}
\left\{
	\begin{array}{c}
	\dsty R_o=R,\quad R_i=\lm R_{i-1},\quad	\theta_i=(\tfrac14\boldsymbol\om_i)^{2-p},\quad\tfrac12\boldsymbol \om_{i-1}\le\boldsymbol \om_{i}\le \overline{\boldsymbol \gm}\boldsymbol \om_{i-1},\\[5pt] 
	\boldsymbol\om_o=\boldsymbol\om,\quad\boldsymbol\om_i=\max\Big\{(1- \eta)\boldsymbol \om_{i-1},\,\widehat{\boldsymbol\gm} {\rm Tail}\big[ \big(u-\boldsymbol \mu_{i-1}^{\pm}\big)_{\pm}; \widetilde{Q}_{i-1}\big]\Big\},\\[5pt]
	Q_i=Q_{R_i}(L\theta_i),\quad\widetilde{Q}_i=K_{R_i}\times\big(-L\theta_i(cR_i)^{sp}, 0\big],\quad Q_i\subset Q_{i-1}, 
	\\[5pt]
	\dsty\boldsymbol\mu_i^+=\essup_{Q_i}u,
	\quad
	\boldsymbol\mu_i^-=\essinf_{Q_i}u,
	\quad
	\essosc_{Q_{R_i}(\boldsymbol\om_i^{2-p})}u\le\boldsymbol\om_i.
	\end{array}
\right.
\end{equation*}
The induction argument will show that the above oscillation estimate continues to hold for the $(j+1)$-th step.

In fact, we can repeat all the previous arguments, which now are adapted with  $\boldsymbol\mu^{\pm}_j$, $\boldsymbol\om_j$, $R_j$, $\theta_j$, $Q_j$, etc. In the end, we have a reduction of oscillation parallel with \eqref{Eq:osc1:2}, that is,
\begin{equation*}
\essosc_{Q_{\frac14cR_j}( \theta_j)} u\le \big(1- \eta\big)\boldsymbol \om_j,
\end{equation*}
provided
\[
\widehat{\boldsymbol\gm}{\rm Tail}\big[ \big(u-\boldsymbol \mu_j^{\pm}\big)_{\pm}; \widetilde{Q}_j\big] \le \boldsymbol\om_j.
\]
In any case, we have
\begin{equation}\label{Eq:osc1:3}
\essosc_{Q_{\frac14cR_j}( \theta_j)} u\le \max\Big\{(1- \eta)\boldsymbol \om_{j},\,\widehat{\boldsymbol\gm} {\rm Tail}\big[ \big(u-\boldsymbol \mu_{j}^{\pm}\big)_{\pm}; \widetilde{Q}_{j}\big]\Big\}:=\boldsymbol \om_{j+1}.
\end{equation}

Now, setting $\theta_{j+1}=(\tfrac14\boldsymbol \om_{j+1})^{2-p}$ and $R_{j+1}=\lm R_j$, like in \eqref{Eq:lm:0-} it is straightforward to verify the set inclusion
\begin{equation}\label{Eq:lm:j-}
Q_{R_{j+1}}(L\theta_{j+1})\subset Q_{\frac14cR_j}( \theta_j), \quad\text{if}\quad \lm\le c L^{-\frac1{sp}}2^{\frac{2-p}{sp}-2},
\end{equation}
which, by \eqref{Eq:osc1:3}, implies
\[
Q_{R_{j+1}}(L \theta_{j+1})\subset Q_{R_{j}}(L \theta_{j})\quad\text{and}\quad\essosc_{Q_{R_{j+1}}(L \theta_{j+1})} u \le \boldsymbol \om_{j+1},
\]
and then obviously,
\[
\essosc_{Q_{R_{j+1}}(\boldsymbol\om_{j+1}^{2-p})}u\le\boldsymbol\om_{j+1}.
\]
Therefore,  the induction argument is  completed. 
The previous deduction works for any $c\in(0,\tfrac14)$, and the final choice of $c$ will be made in the next section.
Note also that, the estimates $\boldsymbol \om_{i}\le \overline{\boldsymbol \gm}\boldsymbol \om_{i-1}$ did not play a role in obtaining the last display. However, it will be used next, and hence  we postpone its proof.

\subsection{Modulus of Continuity}
By construction of last sections, we have fixed $L$ in terms of the data and obtained $R_n=\lm^n R $, $\theta_n=(\tfrac14\boldsymbol\om_n)^{2-p}$,
$Q_n=Q_{R_n}(L\theta_n)$ for $n\in\nn$, such that $Q_n\subset Q_{n-1}$,
and the oscillation estimate
\begin{equation}\label{Eq:osc-est-1-}
\essosc_{Q_{R_n}(\boldsymbol\om_n^{2-p})}u\le \boldsymbol\om_n=\max\Big\{(1-\eta)\boldsymbol\om_{n-1},\,  \widehat{\boldsymbol\gm} {\rm Tail}\big[ \big(u-\boldsymbol \mu_{n-1}^{\pm}\big)_{\pm}; \widetilde{Q}_{n-1}\big]\Big\}.
\end{equation}
Deriving an explicit modulus of continuity encoded in this oscillation estimate is similar to Section~\ref{S:modulus}.  To avoid repetition, we only briefly discuss the procedure, highlighting the main differences. 

The tail estimate can be performed just like in \eqref{Eq:tail-rewrite} and \eqref{Eq:tail-rewrite:1}; note that this procedure does not distinguish $p>2$ and $p<2$. Taking the negative truncation for instance, one only needs to replace $\boldsymbol\om_{j}^{2-p}$ there by $L(\tfrac14\boldsymbol\om_j)^{2-p}$, and eventually one reaches an analogous estimate as \eqref{Eq:tail-rewrite:2}. Writing such an estimate with the index $n-1$, we arrive at
\begin{align}\nonumber
{\rm Tail}\big[ \big(u-\boldsymbol \mu_{n-1}^{-}\big)_{-}; \widetilde{Q}_{n-1}\big] &\le \boldsymbol \gm \boldsymbol\om_{n-1}\sum_{i=1}^{n-1}( 2^{p-1}c^{sp}  )^{n-i} \\ \label{Eq:tail-rewrite:2-}
&\quad + \boldsymbol \gm  \int^0_{-L\theta_{n-1}(cR_{n-1})^{sp}}    \int_{\rn\setminus K_{\widetilde{R}}} \frac{u_{-}^{p-1}}{|x|^{N+sp}}\,\dx \dt.
\end{align}
Here, the constant $\boldsymbol\gm$ takes $L$ into account.

Departing from this estimate, we first show that its right-hand side can be bounded by $ \boldsymbol \gm \boldsymbol\om_{n-1}$, if $c$ is properly chosen. In fact, the integral term can be estimated like in \eqref{Eq:tail-int}, that is,
\[
\boldsymbol \gm  \int^0_{-L\theta_{n-1}(cR_{n-1})^{sp}}    \int_{\rn\setminus K_{\widetilde{R}}} \frac{u_{-}^{p-1}}{|x|^{N+sp}}\,\dx \dt
\le \boldsymbol \gm\boldsymbol\om_{n-1}  ( 2^{p-1}c^{sp}  )^{n-1} + \frac{\boldsymbol \gm}{\widehat{\boldsymbol \gm}}\boldsymbol\om_{n-1} . 
\]
Substituting this estimate back in \eqref{Eq:tail-rewrite:2-}, we then choose $c$ to satisfy
\[
 \boldsymbol \gm \boldsymbol\om_{n-1}\sum_{i=1}^{n-1}( 2^{p-1}c^{sp}  )^{n-i}\le\frac12 \boldsymbol\om_{n-1},
\]
which is implied if one imposes that
\begin{equation}\label{Eq:choice-c-2-}
c\le (2^{p+1}\boldsymbol \gm )^{\frac1{sp} }.
\end{equation}
Hence, the right-hand side of \eqref{Eq:tail-rewrite:2-} is bounded by $ \boldsymbol \gm \boldsymbol\om_{n-1}$, and by definition of $\boldsymbol\om_{n}$ in \eqref{Eq:osc-est-1-}, 
\begin{equation}\label{Eq:om-om-n}
\boldsymbol\om_{n}\le\overline{\boldsymbol\gm}\boldsymbol\om_{n-1}.
\end{equation}
Again, $\overline{\boldsymbol\gm}$ only depends on the data.

Next, we refine the above estimates. Indeed, notice that by induction, $\theta_{n-1}R_{n-1}^{sp}\le \theta R^{sp}$, and hence
$$
L\theta_{n-1}(cR_{n-1})^{sp}\le L c^{sp}  \theta R^{sp}=Lc^{sp}4^{p-2}\boldsymbol\om^{2-p}R^{sp}\le \boldsymbol\om^{2-p}R^{sp},
$$
provided we impose that
\begin{equation}\label{Eq:choice-c-3-}
c\le4^{\frac{2-p}{sp}}L^{-\frac{1}{sp}}.
\end{equation}
Under these requirements of $c$, we can further estimate the tail in \eqref{Eq:tail-rewrite:2-}, and then use it to estimate $\boldsymbol\om_n$ defined in \eqref{Eq:osc-est-1-}. Consequently, we see that
\[
\boldsymbol\om_n\le (1-\eta)\boldsymbol\om_{n-1}+\boldsymbol \gm  \int^0_{- \boldsymbol\om^{2-p} R^{sp}}    \int_{\rn\setminus K_{\widetilde{R}}} \frac{|u|^{p-1}}{|x|^{N+sp}}\,\dx \dt.
\]
Iterating the above estimate  and joining it with the oscillation estimate \eqref{Eq:osc-est-1-} yield that
\begin{equation}\label{Eq:osc-est-2-}
\essosc_{Q_{R_n}(\boldsymbol\om_n^{2-p})}u \le (1-\eta)^n \boldsymbol\om + \boldsymbol \gm  \int^0_{- \boldsymbol\om^{2-p}R^{sp}}    \int_{\rn\setminus K_{\widetilde{R}}} \frac{|u|^{p-1}}{|x|^{N+sp}}\,\dx \dt.
\end{equation}

This is an analog of \eqref{Eq:osc-est-2}, starting from which the final argument runs similarly as in Section~\ref{S:modulus}.
In fact, we observe that the sequence $\{\boldsymbol\om_n^{2-p}R_n^{sp}\}_{n\in\nn}$ decreases to $0$. Moreover, iterating  $\boldsymbol\om_n\ge\tfrac12 \boldsymbol\om_{n-1}$,  we can estimate
\[
\boldsymbol\om_n^{2-p}R_n^{sp}\le (2^{-n}\boldsymbol\om)^{2-p}(\lm^n R)^{sp}=(2^{p-2}\lm^{sp})^{n}\boldsymbol\om^{2-p}R^{sp}\le 2^{-n}\boldsymbol\om^{2-p}R^{sp},
\]
provided we require that 
\begin{equation}\label{Eq:lm:n-}
\lm<2^{-\frac{p-1}{sp}} .
\end{equation}
Fix some $r\in(0,R)$.   There must be some $n\in\nn_0$ such that
\[
\boldsymbol\om_{n+1}^{2-p}R_{n+1}^{sp}\le \boldsymbol\om^{2-p}r^{sp}<\boldsymbol\om_n^{2-p}R_n^{sp}.
\]
The right-hand side inequality implies that 
$$
r<R_n \quad\text{and}\quad Q_r(\boldsymbol\om^{2-p})\subset Q_{R_n}(\boldsymbol\om_n^{2-p}).
$$ 
Whereas the left-hand side inequality implies, iterating \eqref{Eq:om-om-n}, that  
\[
 \boldsymbol\om^{2-p}r^{sp} \ge \boldsymbol\om_{n+1}^{2-p}R_{n+1}^{sp}\ge (\overline{\boldsymbol\gm}^{2-p}\lm^{sp})^{n+1}\boldsymbol\om^{2-p} R^{sp},
\]
from which we obtain that
\[
\Big(\frac{r}{R}\Big)^{sp}\ge(\overline{\boldsymbol\gm}^{2-p}\lm^{sp})^{n+1}\quad\implies\quad (1-\eta)^{n+1}\le\Big(\frac{r}{R}\Big)^{\be}
\]
where 
\[
\be=\frac{sp\ln(1-\eta)}{\ln(\overline{\boldsymbol\gm}^{2-p}\lm^{sp})}.
\]
Note that the choice of $\lm$ is made out of the smaller one among \eqref{Eq:lm:0-}, \eqref{Eq:lm:j-} and \eqref{Eq:lm:n-}, once $c$ is chosen to be the smaller one of \eqref{Eq:choice-c-2-} and \eqref{Eq:choice-c-3-}.

Finally, collecting all these estimate in \eqref{Eq:osc-est-2-}, we have for all $r\in(0,R)$,
\begin{equation*}
\essosc_{Q_r(\boldsymbol\om^{2-p})}u \le 2 \boldsymbol\om \Big(\frac{r}{R}\Big)^{\be} + \boldsymbol \gm  \int^0_{-\boldsymbol\om^{2-p}R^{sp}}    \int_{\rn\setminus K_{\widetilde{R}}} \frac{|u|^{p-1}}{|x|^{N+sp}}\,\dx \dt.
\end{equation*}
An interpolation will conclude the proof in the same way as in Section~\ref{S:modulus}.
\section{Proof of Theorem~\ref{Thm:1:2}}
Let $Q_{\widetilde{R}}\subset E_T$,
and let $(x_o,t_o)=(0,0)$ for simplicity. Define
\begin{equation*}
\begin{aligned}
 \boldsymbol\om:=2\essup_{Q_{\widetilde{R}}} |u| 
+
\bigg(\bint^{0}_{- \widetilde{R}^{sp}}  \Big( \widetilde{R}^{sp} \int_{\rn\setminus K_{\widetilde{R}}} \frac{|u|^{p-1}}{|x|^{N+sp}}\,\dx\Big)^{ 1+\varep} \dt\bigg)^{\frac1{1+\varep}}.
\end{aligned}
\end{equation*}
It is apparent that such $ \boldsymbol\om$ satisfies \eqref{Eq:intr-cyl}, and also $Q_R( \boldsymbol\om^{2-p})\subset Q_{\widetilde{R}}$ by assumption.
Therefore, applying Theorem~\ref{Thm:1:1} we have
	\begin{equation*}
	\essosc_{ Q_{\sig r}( \boldsymbol\om^{2-p})}u 
	\le 2  \boldsymbol\om \Big(\frac{r}{R}\Big)^{\be}  + \boldsymbol \gm  \int^{0}_{- \boldsymbol\om^{2-p}(rR)^{sp/2}}    \int_{\rn\setminus K_{\widetilde{R}}} \frac{|u|^{p-1}}{|x|^{N+sp}}\,\dx \dt
	\end{equation*}
for any $0<r<R$.
The integral term is estimated by H\"older's inequality:
\begin{align*}
&\int^{0}_{-\boldsymbol\om^{2-p}(rR)^{sp/2}}    \int_{\rn\setminus K_{\widetilde{R}}} \frac{|u|^{p-1}}{|x|^{N+sp}}\,\dx \dt\\
&\quad\le  [\boldsymbol\om^{2-p}(rR)^{sp/2}]^{\frac{\varep}{1+\varep}}
\bigg(\int^{0}_{- \boldsymbol\om^{2-p}(rR)^{sp/2}}  \Big(\int_{\rn\setminus K_{\widetilde{R}}} \frac{|u|^{p-1}}{|x|^{N+sp}}\,\dx\Big)^{ 1+\varep} \dt\bigg)^{\frac1{1+\varep}}\\
&\quad\le  \frac{[\boldsymbol\om^{2-p}(rR)^{sp/2}]^{\frac{\varep}{1+\varep}}}{[\boldsymbol\om^{2-p}R^{sp}]^{\frac{\varep}{1+\varep}}}
\bigg(\bint^{0}_{- \widetilde{R}^{sp}}  \Big( \widetilde{R}^{sp} \int_{\rn\setminus K_{\widetilde{R}}} \frac{|u|^{p-1}}{|x|^{N+sp}}\,\dx\Big)^{ 1+\varep} \dt\bigg)^{\frac1{1+\varep}}\\
&\quad\le  \boldsymbol\om \Big(\frac{r}{R}\Big)^{\frac{\varep sp}{2(1+\varep)}}.
\end{align*}
As a result, the desired H\"older estimate follows.


\begin{thebibliography}{99}
\bibitem{Adi}
K. Adimurthi, H. Prasad and V. Tewary,
{\it Local H\"older regularity for nonlocal parabolic $p$-Laplace equations}, arXiv:2205.09695. 
\bibitem{Byun-ampa} S.S.Byun and K. Kim, {\it A H\"older estimate with an optimal tail for nonlocal parabolic $p$-Laplace equations}, Ann. Mat. Pura Appl. (4), to appear.
\bibitem{Caff-Vass-11} L.A. Caffarelli, C.H. Chan and A. Vasseur, {\it Regularity theory for parabolic nonlinear integral operators},
J. Amer. Math. Soc., {\bf24}(3), (2011), 849--869.
\bibitem{Cozzi} M. Cozzi, {\it Regularity results and Harnack inequalities for minimizers and solutions of nonlocal problems: a unified approach via fractional De Giorgi classes}, J. Funct. Anal., {\bf272}, (2017), 4762--4837.
\bibitem{DB} E. DiBenedetto, ``Degenerate Parabolic 
Equations", Universitext, Springer-Verlag, New York, 1993.  
\bibitem{DBGV-mono} E. DiBenedetto, U. Gianazza and V. Vespri, 
``Harnack's Inequality for Degenerate and Singular Parabolic 
Equations", Springer Monographs in Mathematics, Springer-Verlag, 
New York, 2012.
\bibitem{DKP-1} A. Di Castro, T. Kuusi and G. Palatucci, {\it Local behavior of fractional $p$-minimizers}, Ann. Inst. H.
Poincar\'e Anal. Non Lin\'eaire, {\bf33}, (2016), 1279--1299.
\bibitem{DZZ} M. Ding,  C. Zhang and  S. Zhou,  {\it Local boundedness and H\"older continuity for the parabolic fractional $p$-Laplace equations}, Calc. Var. Partial Differential Equations, {\bf60}(1), (2021),  Paper No. 38, 45~pp.
\bibitem{Kass-23} M. Kassmann and M. Weidner, {\it The parabolic Harnack inequality for nonlocal equations}, arXiv:2303.05975.
\bibitem{Kass-13} M. Felsinger and  M. Kassmann, {\it Local regularity for parabolic nonlocal operators}, Comm. Partial Differential Equations, {\bf38}(9), (2013),  1539--1573. 
\bibitem{Kass-09} M. Kassmann, {\it A priori estimates for integro-differential operators with measurable kernels}, Calc. Var. Partial Differential Equations, {\bf34}, (2009), 1--21.
\bibitem{Liao-cvpd-24} N. Liao, {\it H\"older regularity for the parabolic fractional $p$-Laplacian}, Calc. Var. Partial Differential Equations,
{\bf63}(1), (2024), Paper No. 22, 34~pp.
\end{thebibliography}
\end{document}